\definecolor{darkblue}{rgb}{0,0,0.7}
\definecolor{darkred}{rgb}{0.7,0,0}
\newtheorem{proposition}{Proposition}[section]
\newtheorem{lemma}[proposition]{Lemma}
\newtheorem{theorem}[proposition]{Theorem}
\newtheorem{corollary}[proposition]{Corollary}
\theoremstyle{definition}
\newtheorem{remark}[proposition]{Remark}
\newtheorem{definition}[proposition]{Definition}
\newenvironment{customthm}[1]
  {\innercustomthm\itshape}
  {\endinnercustomthm}
\newenvironment{customcor}[1]
  {\innercustomcor\itshape}
  {\endinnercustomcor}
\tikzstyle{place}=[draw,circle,minimum size=1mm,inner sep=1pt,outer sep=-1.1pt,fill=black]
\tikzstyle{places}=[draw,rectangle,minimum size=8pt,inner sep=0pt]
\tikzstyle{placesf}=[draw,rectangle,minimum size=5pt,inner sep=0pt]
\tikzstyle{placec}=[draw,circle,minimum size=8pt,inner sep=0pt]
\tikzstyle{placecf}=[draw,circle, minimum size=5pt,inner sep=0pt]
\def\K{\mathbb{K}}
\def\G{\mathcal{G}}
\def\p{\mathfrak{p}}
\def\q{\mathfrak{q}}
\def\D{\Delta}
\def\F{\mathcal{F}}
\def\N{\mathcal{N}}
\def\d{\mathrm{depth}}
\def\h{\mathrm{ht}}
\def\l{\langle}
\def\r{\rangle}
\def\x{\mathbf x}
\begin{document}

\title{Square-free powers of Cohen-Macaulay simplicial forests}

\author{Kanoy Kumar Das}
\address{Chennai Mathematical Institute, India}
\email{kanoydas0296@gmail.com; kanoydas@cmi.ac.in}

\author{Amit Roy}
\address{Chennai Mathematical Institute, India}
\email{amitiisermohali493@gmail.com}

\author{Kamalesh Saha}
\address{Department of Mathematics, SRM University-AP, Amaravati 522240, Andhra Pradesh, India}
\email{kamalesh.s@srmap.edu.in; kamalesh.saha44@gmail.com}

\keywords{Square-free powers, Cohen-Macaulay rings, simplicial forests, depth}
\subjclass[2020]{Primary: 13H10, 13C15; Secondary: 05E40, 13F55}

\vspace*{-0.4cm}

\begin{abstract}
     Let $I(\D)^{[k]}$ denote the $k^{\text{th}}$ square-free power of the facet ideal of a simplicial complex $\D$ in a polynomial ring $R$. Square-free powers are intimately related to the `Matching Theory' and `Ordinary Powers'. In this article, we show that if $\D$ is a Cohen-Macaulay simplicial forest, then $R/I(\D)^{[k]}$ is Cohen-Macaulay for all $k\ge 1$. This result is quite interesting since all ordinary powers of a graded radical ideal can never be Cohen-Macaulay unless it is a complete intersection. To prove the result, we introduce a new combinatorial notion called special leaf, and using this, we provide an explicit combinatorial formula of $\d(R/I(\D)^{[k]})$ for all $k\ge 1$, where $\D$ is a Cohen-Macaulay simplicial forest. As an application, we show that the normalized depth function of a Cohen-Macaulay simplicial forest is nonincreasing. 
\end{abstract}

\maketitle
\section{Introduction}

A classical way to treat the class of square-free monomial ideals is by identifying them as a Stanley-Reisner ideal of simplicial complexes. This approach has been instrumental in bridging several areas of mathematics and produced many fascinating results. It was Faridi \cite{Faridi2002} who first treated square-free monomial ideals as a facet ideal of simplicial complexes and introduced the notion of simplicial trees. In case of simple graphs, the class of trees is nothing but the one-dimensional simplicial trees. 

Over the years, many algebraic properties of edge ideals of trees have been extended to facet ideals of simplicial trees. For instance, Faridi \cite{Faridi2002} generalized the sliding depth property of edge ideals of trees proved by Simis-Vasconcelos-Villarreal \cite{SVV1994} to the facet ideals of simplicial trees. Moreover, these ideals have normal and Cohen-Macaulay Rees~ rings.

A central theme in combinatorial commutative algebra is understanding how properties of graded ideals extend to their powers. Many algebraic invariants exhibit predictable asymptotic behavior. A classical example is the work of Herzog, Hibi, and Zheng \cite{HHZ04}, who extended the linearity of minimal free resolutions of edge ideals of simple graphs to all of their ordinary powers. If we consider the Cohen-Macaulay property of a square-free monomial ideal, it is known that all of its ordinary powers are Cohen-Macaulay only when the ideal is a complete intersection ideal. This situation becomes much more intricate if we consider other kind of powers, such as symbolic powers \cite{RTY11}, or square-free powers \cite{MF2024}. This article primarily focuses on the square-free powers of facet ideals of simplicial forests.

Let $I\subseteq R=\K[x_1,\ldots , x_n]$ be a square-free monomial ideal, where $\K$ is a field.
The concept of square-free powers was introduced in \cite{BHZN} and later extended to all monomial ideals through matching powers \cite{ErFi1}. The motivation for studying the square-free powers comes from graph theory. The $k^{th}$ \textit{square-free power} of $I$, denoted by $I^{[k]}$, is the ideal of $R$ generated by all square-free monomials of the ideal $I^k$. Note that $I^{[k]}=\l 0\r$ for $k>>0$. Let $\mathcal{G}(I)$ denote the unique minimal monomial generating set of a monomial ideal $I$. Then one can observe that $\mathcal{G}(I^{[k]})$ is the set of square-free monomials contained in $\mathcal{G}(I^k)$. In other words, the minimal generating set $\mathcal{G}(I^{[k]})$ of the $k^{th}$ square-free power of $I$ has a one-to-one correspondence with the set of all possible $k$-matchings of the underlying simplicial complex. So far, numerous studies have been conducted in various directions on the square-free powers of edge ideals of graphs (see, for instance, \cite{CFL1,DRS12024,EHHS,EHHM12,ErHi1,FiHeHi,MF2024,SaS1}). However, to the best of our knowledge, the investigation of square-free powers of arbitrary square-free monomial ideals has been explored only in \cite{kamberi2024squarefreepowerssimplicialtrees}.

The primary goal of this article is to explore the Cohen-Macaulay property of square-free powers of facet ideals of simplicial forests. It is well-known that for any graded radical ideal $I$ in a polynomial ring $R$, $R/I^k$ is Cohen-Macaulay for all $k$ if and only if $I$ is a complete intersection (see, for instance, \cite{CowsikNori}). This naturally raises the question of whether the same holds true for square-free powers. Interestingly, this is not the case, as it has been shown in \cite{DRS12024} that all square-free powers of the edge ideals of Cohen-Macaulay forests are also Cohen-Macaulay. Meanwhile, in \cite{MF2024}, Ficarra and Moradi showed that among the classes of chordal, Cameron-Walker, and very well-covered graphs, the only graphs whose all square-free powers of edge ideals satisfy the Cohen-Macaulay property are Cohen-Macaulay forests and complete graphs. To the best of our knowledge, no such square-free monomial ideals, apart from the above-mentioned edge ideals of simple graphs, are known for which all square-free powers are Cohen-Macaulay. Identifying these classes of ideals remains a challenging problem. In this paper, we provide a broad class of such square-free monomial ideals, which are not necessarily equigenerated. In particular, as a main result of this paper, we show that all square-free powers of the facet ideal of a Cohen-Macaulay simplicial forest are Cohen-Macaulay. 

To achieve our main result, we explicitly compute the (Krull) dimension and depth of $R/I(\Delta)^{[k]}$ for a Cohen-Macaulay simplicial forest $\Delta$. In this context, we introduce the notion of special leaves in a simplicial complex and show their existence in Cohen-Macaulay simplicial forests. Here, we remark that the concept of special leaves might be useful in further studies of simplicial forests. The computation of $\d(R/I(\D)^{[k]})$ intricately uses the combinatorial characterization of Cohen-Macaulay simplicial forests and the presence of a special leaf, along with the standard homological techniques. The numerical formula of the depth of $R/I(\Delta)^{[k]}$ also helps us to analyze the behavior of the normalized depth function. The normalized depth function of square-free powers was introduced by Erey-Herzog-Hibi-Madani in \cite{EHHM12}, where they conjectured that this function is nonincreasing. Later, Fakhari disproved this conjecture in \cite{Fakhari_increasing}. However, the conjecture still remains open for square-free powers of edge ideals of graphs and holds affirmatively for several subclasses (see, for instance, \cite{CFL1, MF2024}). It follows from our numerical formula for the depth that the conjecture holds affirmatively in the case of Cohen-Macaulay simplicial forests.

The paper is structured as follows. In \Cref{Section 2}, we recall the definitions and basic properties of a Cohen-Macaulay simplicial forest. The primary objective of this section is to introduce the concept of a special leaf in a simplicial complex. Further, we show the existence of such a special leaf in a Cohen-Macaulay simplicial forest (see \Cref{special leaf existence}). In \Cref{Section 3}, we first prove two technical lemmas \Cref{colon lemma} and \Cref{contraction CM}. Then, we establish the formula for the Krull dimension of the corresponding quotient rings of the square-free powers of a Cohen-Macaulay simplicial forest (see \Cref{dimension lemma}). Finally, we derive the depth formula as follows.

\begin{customthm}{\ref{main theorem 1}}
    Let $\D$ be a Cohen-Macaulay simplicial forest on the vertex set $V(\D)$. Then 
    \[
    \d(R/I(\D)^{[k]})=|V(\D)|-\nu(\D)+k-1,
    \]
    where $\nu(\D)$ is the matching number of $\D$.
\end{customthm}
\noindent As an immediate application of the above theorem and the computation of dimension, we obtain the main result of the paper:

\begin{customcor}{\ref{main cor}}
        Let $\D$ be a Cohen-Macaulay simplicial forest. Then for all $k\ge 1$, $I(\D)^{[k]}$ is also Cohen-Macaulay.
\end{customcor}
\noindent Additionally, in \Cref{normalised depth}, we show that the normalized depth function of square-free powers of the facet ideal of a Cohen-Macaulay simplicial forest is nonincreasing.

\section{Cohen-Macaulay Simplicial Forests and Special Leaf}\label{Section 2}

In this section, we begin by reviewing the definition of simplicial forests and the combinatorial criteria for their Cohen-Macaulay property. The main purpose of this section is to introduce a new combinatorial concept in a simplicial complex called \textit{special leaf}. Also, we show the existence of such a special leaf in a Cohen-Macaulay simplicial forest, and this fact is one of the key ingredients in proving our main result in the subsequent section.

A \textit{simplicial complex} $\Delta$ over a set of vertices $V=\{x_1,\ldots ,x_n\}$ is a collection of subsets of $V$ such that if $F\in \Delta$ and $G\subseteq F$, then $ G\in \Delta$. An element of $\D$ is called a \textit{face} of $\D$, and the \textit{dimension} of a face $F$ of $\Delta$, denoted by $\mathrm{dim}(F)$, is defined as $|F|-1$, where $|F|$ is the number of vertices in $F$. The dimension of $\D$, denoted by $\dim(\D)$, is given by $\mathrm{max}\{\mathrm{dim}(F)\mid F\in \D\}$. %The faces of dimensions $0$ and $1$ are called \textit{vertices} and \textit{edges}, respectively.
The maximal faces of $\D$ under inclusion are called the \textit{facets} of $\D$. We denote the set of all facets of $\D$ by $\F(\D)$. If $\F(\D)=\{F_1,\ldots , F_m\}$, then we simply write $\D=\langle F_1,\ldots , F_m\rangle$. For each $F_i\in\F(\D)$, the set $\N_{\Delta}(F_i)=\{F_j\in\F(\Delta)\mid F_j\cap F_i\neq\emptyset\text{ and }j\neq i\}$ is called the set of {\it neighbors} of $F_i$. The set $\N_{\Delta}[F_i]=\N_{\Delta}(F_i)\cup\{F_i\}$ is called the {\it closed neighborhood} of $F_i$ in $\Delta$. In this article, by a \textit{subcomplex of $\D$}, we mean a simplicial complex $\D'$  where $\F(\D')\subseteq \F(\D)$ following \cite{Faridi2002}. Note that this differs from the standard notion of subcomplex commonly used in the literature.

\subsection{Simplicial forests and their facet ideals} In \cite{Faridi2002}, Faridi extended the notion of forests in graph theory to the realm of simplicial complexes and introduced the class of simplicial forests. A facet $F$ of a simplicial complex $\D$ is called a \textit{leaf} if either $F$ is the only facet of $\D$ or there exists a facet $G \in \D$ such that $G\neq F$ and $F \cap G \supseteq F \cap H$ for all facets $H \neq F$ of $\D$. Such a facet $G$ is then called a \textit{joint} of $F$ in $\D$. 
A vertex of a simplicial complex $\D$ is called a \textit{free vertex} if it belongs to exactly one facet of $\D$. Note that every leaf $F$ of $\D$ contains a \textit{free vertex}. A connected simplicial complex $\D$ is a \textit{simplicial tree} if every nonempty subcomplex of $\D$ has a leaf. A {\it simplicial forest} is a simplicial complex whose every connected component is a simplicial tree. It directly follows from the definition that if $\D$ is a simplicial tree, then any subcomplex of $\D$ is a simplicial forest.

Let $\D$ be a simplicial complex with the set of facets $\F(\D)$. A subset $M$ of $\F(\D)$ is called a {\it matching} of $\D$ if for any two distinct facets $F,F'\in M$, we have $F\cap F'=\emptyset$. The maximum cardinality of a set of matchings in $\D$ is called the {\it matching number} of $\D$ and is denoted by $\nu(\D)$. Let $V(\D)=\{x_1,\ldots,x_n\}$, and $R$ denote the polynomial ring $\K[x_1,\ldots , x_n]$, where $\K$ is a field. Sometimes $R$ is also denoted by $\mathbb K[V(\D)]$ in order to specify the relationship with the vertex set $V(\D)$. For a subset of variables $S\subseteq \{x_1,\ldots , x_n\}$, we will write $\mathbf{x}_S$ to denote the monomial $\prod_{x_i\in S}x_i$. Then the \textit{facet ideal} of $\D$, denoted by $I(\D)$, is the ideal generated by monomials corresponding to each facet of $\D$, i.e.,
\[
I(\D)=\langle \mathbf{x}_F \mid F \text{ is a facet of } \D \rangle.
\]

\subsection{Cohen-Macaulay simplicial forests} The Cohen-Macaulay property of a simplicial forest is combinatorially characterized in terms of the notion of grafting in \cite{Faridi2005}. In fact, the procedure called grafting of a simplicial complex was first introduced by Faridi \cite{Faridi2005}, which we describe below.

\begin{definition}\cite[Definition 7.1]{Faridi2005}\label{CM SF defn}
    A simplicial complex $\D$ is said to be a \textit{grafting} of a simplicial complex $\D'=\langle G_1,\ldots , G_s\rangle$ with the simplices $F_1,\ldots ,F_r$ (or we say that $\D$ is \textit{grafted}) if $\D=\langle F_1,\ldots ,F_r\rangle\cup \langle G_1,\ldots , G_s\rangle$ with the following properties:
    \begin{enumerate}
        \item $V(\D')\subseteq \cup_{i=1}^r F_i$,
        \item $F_1,\ldots ,F_r$ are all the leaves of $\D$,
        \item $\{ F_1,\ldots ,F_r\} \cap \{ G_1,\ldots , G_s\}=\emptyset$,
        \item For $i\neq j$, $F_i\cap F_j=\emptyset$,
        \item If $G_i$ is a joint of $\D$, then $\D\setminus \langle G_i \rangle$ is also grafted. Equivalently, for each $G_i\in \F(\Delta')$, $\D\setminus\l G_i\r$ is a grafted simplicial complex.
    \end{enumerate}
\end{definition}

One of the main results in this direction is the following combinatorial classification of the Cohen-Macaulay property of the facet ideal of a simplicial tree.

\begin{theorem}\label{CM forest theorem}\cite[Theorem 8.2, Corollary 7.8]{Faridi2005}
    Let $\D$ be a simplicial tree, and let $I(\D)$ denote its facet ideal in the polynomial ring $R$. Then $R/I(\D)$ is Cohen-Macaulay if and only if $\D$ is grafted.
\end{theorem}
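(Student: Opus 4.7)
The plan is to prove both directions of the equivalence by strong induction on the number of facets $|\F(\D)|$. The base case $|\F(\D)|=1$ is immediate: a single-facet tree $\D=\l F\r$ is (vacuously) grafted with $F$ as its lone leaf and no joints, and its facet ideal $I(\D)=(\mathbf{x}_F)$ is principal, so $R/I(\D)$ is a hypersurface and hence Cohen-Macaulay.

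For the forward direction (grafted $\Rightarrow$ Cohen-Macaulay), I would exploit the recursive structure built into condition (5) of \Cref{CM SF defn}. If $\D$ has an interior facet (joint) $G$, then $\D':=\D\setminus\l G\r$ is grafted by hypothesis, so $R/I(\D')$ is Cohen-Macaulay by induction. Using the short exact sequence
\[
0 \longrightarrow R/(I(\D'):\mathbf{x}_G)\xrightarrow{\;\cdot\mathbf{x}_G\;}R/I(\D')\longrightarrow R/I(\D)\longrightarrow 0,
\]
one can estimate $\depth(R/I(\D))$ via the depth lemma, provided one understands $R/(I(\D'):\mathbf{x}_G)$. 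The key combinatorial observation here is that $I(\D'):\mathbf{x}_G$ is essentially the facet ideal of a smaller grafted complex obtained by ``contracting'' $G$ against its neighbors in $\D'$, so induction again applies. If $\D$ has no interior facets, then $\D$ consists of pairwise disjoint simplices and $I(\D)$ is a complete intersection. A parallel computation of $\dim(R/I(\D)) = |V(\D)| - r$, using that the leaves of a grafted complex partition $V(\D)$ (forcing vertex cover number exactly $r$), finally yields $\depth = \dim$.

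For the converse direction (Cohen-Macaulay $\Rightarrow$ grafted), the main tool is unmixedness: every minimal vertex cover of $\D$ has the same cardinality. I would verify the five grafting conditions in turn. Conditions (1) and (4) — that the leaves cover $V(\D)$ and are pairwise disjoint — follow by showing that an uncovered vertex or a vertex shared between two leaves would permit one to build minimum vertex covers of distinct sizes, contradicting unmixedness; here the presence of free vertices inside each leaf lets one exchange vertices in a controlled way. Conditions (2) and (3) are bookkeeping once the leaves are identified. For the recursive condition (5), I would prove that deleting a joint $G_i$ preserves the Cohen-Macaulay property (again using the short exact sequence above, but read in reverse), so the inductive hypothesis concludes that $\D\setminus\l G_i\r$ is grafted.

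The main obstacle I anticipate is identifying $I(\D'):\mathbf{x}_G$ as a manageable ideal in the forward direction: the colon can introduce new generators whose interpretation as a facet ideal of a grafted complex is not transparent, particularly when $G$ meets many other facets. A related subtlety appears in condition (5) of the converse, where deleting a joint may create new leaves and reshape the collection of minimal vertex covers, so one must carefully track how unmixedness is inherited under the deletion. Resolving both points cleanly should require the full strength of the simplicial tree hypothesis, which guarantees that every subcomplex contains a leaf and thereby fuels the induction throughout.
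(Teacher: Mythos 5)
Note first that the paper does not prove this statement at all: it is quoted from Faridi, so you are really being compared with Faridi's argument (grafted $\Rightarrow$ CM by her Corollary~7.8, and CM $\Rightarrow$ unmixed $\Rightarrow$ grafted for trees in her Theorem~8.2). Measured on its own terms, your sketch has a genuine gap in the forward direction. In your exact sequence $0\to R/(I(\D'):\mathbf{x}_G)\to R/I(\D')\to R/I(\D)\to 0$ with $\D'=\D\setminus\langle G\rangle$ and $G$ a joint, the Depth Lemma only gives $\depth R/I(\D)\ \ge\ \min\{\depth R/(I(\D'):\mathbf{x}_G)-1,\ \depth R/I(\D')\}$. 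But the $r$ pairwise disjoint leaves $F_1,\ldots,F_r$ of $\D$ are still facets of $\D'$, so every vertex cover of $\D'$ has at least $r$ elements; hence every minimal prime of $I(\D')$, and therefore of $I(\D'):\mathbf{x}_G$, has height at least $r$, giving $\depth R/(I(\D'):\mathbf{x}_G)\le\dim R/(I(\D'):\mathbf{x}_G)\le |V(\D)|-r=\dim R/I(\D)$. So the bound from this sequence is capped at $\dim R/I(\D)-1$ and can never certify Cohen--Macaulayness, no matter how well you understand the colon ideal; the obstruction sits in the map $H^{d}_{\mathfrak m}(R/(I(\D'):\mathbf{x}_G))\to H^{d}_{\mathfrak m}(R/I(\D'))$ in top degree, which the Depth Lemma cannot see. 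The workable decomposition is the opposite one, splitting off a \emph{leaf} rather than a joint: colon and quotient $I(\D)$ itself by $\mathbf{x}_F$ for a leaf $F$, where both ends are again facet ideals of smaller grafted forests of the correct dimensions --- exactly the mechanism this paper uses for all square-free powers in \Cref{colon lemma} and \Cref{main theorem 1}. Moreover, your claim that $I(\D'):\mathbf{x}_G$ is ``essentially the facet ideal of a smaller grafted complex'' is unsubstantiated; compare the effort \Cref{contraction CM} spends on a far more restricted contraction.

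The converse also has a gap at condition~(5): from Cohen--Macaulayness of the cokernel $R/I(\D)$ you cannot read the exact sequence ``in reverse'' to conclude that $R/I(\D\setminus\langle G_i\rangle)$ is Cohen--Macaulay --- the Depth Lemma gives no such implication, and deleting a facet need not preserve CM-ness in general. Faridi's route for this direction is purely combinatorial: CM implies unmixed, and an unmixed simplicial tree is shown to be grafted by induction (the tree hypothesis supplying good leaves), with condition~(5) verified by passing unmixedness to $\D\setminus\langle G_i\rangle$ combinatorially rather than homologically. Finally, even your dimension count needs more than ``the leaves partition $V(\D)$'': to produce a cover of size exactly $r$ one must choose, for each leaf $F_i$, a vertex in $\bigcap_{F_i\cap G_j\neq\emptyset}(F_i\cap G_j)$, whose nonemptiness rests on the good-leaf chain (this is precisely the argument of \Cref{dimension lemma} at $k=1$).
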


Let $\Delta=\l F_1,\ldots,F_r\r\cup\l G_1,\ldots,G_s\r$ be a grafting of the simplicial complex $\Delta'=\l G_1,\ldots,G_s\r$. If for some $j\in[s]$ and $i\in[r]$, $G_j\subseteq F_i$, then we see that $\Delta$ can also be considered as a grafting of the simplicial complex $\Delta''=\l G_1,\ldots,\widehat{G_j},\ldots,G_s\r$. Thus, without loss of generality, we may assume that $F_1,\ldots,F_r,G_1,\ldots,G_s$ are all the facets of $\Delta$, i.e., $\F(\D)=\{F_1,\ldots,F_r,G_1,\ldots,G_s\}$. Such a presentation of $\Delta$ is said to be a \textit{minimal presentation}, and it is unique up to a permutation of the indices.

\begin{remark}\label{remark 2}
    From now onwards, when we say that $\Delta=\l F_1,\ldots,F_r\r\cup\l G_1,\ldots,G_s\r$ is a Cohen-Macaulay simplicial forest, we assume that $\Delta$ has the minimal presentation and $\Delta$ is a grafting of the simplicial forest $\Delta'=\l G_1,\ldots,G_s\r$ by the simplices $F_1,\ldots,F_r$. 
\end{remark}

The notion of a good leaf was introduced in \cite{CFS1}. However, the existence of such a leaf in an arbitrary simplicial forest was first proved in \cite{HHTX1}. 

\begin{definition}[Good leaf]
    A facet $F$ of a simplicial complex $\D$ is called a \textit{good leaf} if $F$ is a leaf of every subcomplex of $\D$ that contains the facet $F$. 
\end{definition}

The equivalent definition of a good leaf given below is well-known in the literature, but we include a short proof here for the sake of completeness.

\begin{proposition}\label{goof leaf prop}
    Let $\D$ be a simplicial complex and $F\in \D$ be a leaf with $\N_{\D}(F)=\{F_1,\ldots , F_m\}$. Then $F$ is a good leaf of $\D$ if and only if there is a permutation $\{j_1,\ldots,j_m\}$ of the set $\{1,\ldots,m\}$ such that $F\cap F_{j_1}\supseteq F\cap F_{j_2}\supseteq\dots\supseteq F\cap F_{j_m}$.
\end{proposition}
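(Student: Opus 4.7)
The plan is to prove both implications directly from the definitions, using the observation that goodness of the leaf $F$ reduces to a chain condition on the intersections $F \cap F_i$, and that this chain condition can already be read off from the three-facet subcomplexes $\l F, F_i, F_j\r$.

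For the forward implication, suppose $F$ is a good leaf; I want to show the sets $\{F \cap F_i : 1 \le i \le m\}$ are totally ordered by inclusion, which yields the permutation $j_1,\ldots,j_m$ by arranging the intersections in decreasing order. Fix any two distinct neighbors $F_i, F_j \in \N_{\D}(F)$ and consider the subcomplex $\D_{ij} = \l F, F_i, F_j\r$ of $\D$. Since $F_i$ and $F_j$ are distinct facets of $\D$, neither can strictly contain the other, so $\F(\D_{ij}) = \{F, F_i, F_j\}$. By the good-leaf hypothesis, $F$ is a leaf of $\D_{ij}$ and therefore admits a joint in $\F(\D_{ij})\setminus\{F\} = \{F_i, F_j\}$. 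If $F_i$ is the joint we obtain $F \cap F_j \subseteq F \cap F_i$, and otherwise $F \cap F_i \subseteq F \cap F_j$; in either case the two intersections are comparable.

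For the reverse implication, assume such a decreasing chain exists and let $\D'$ be any subcomplex of $\D$ containing the facet $F$. Since $\F(\D') \subseteq \F(\D)$ by the subcomplex convention adopted in the paper, no facet of $\D'$ can strictly contain $F$ (it would be a facet of $\D$ properly containing $F$), hence $F \in \F(\D')$. The neighbors of $F$ in $\D'$ form a subset of $\N_{\D}(F) = \{F_1,\ldots,F_m\}$, indexed by some $S \subseteq \{1,\ldots,m\}$. Let $\ell$ be the smallest index such that $j_\ell \in S$; then $F \cap F_{j_\ell} \supseteq F \cap F_{j}$ for every $j \in S$ by the chain hypothesis, so $F_{j_\ell}$ is a joint of $F$ in $\D'$. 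Consequently $F$ is a leaf of $\D'$, and as $\D'$ was arbitrary, $F$ is a good leaf.

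The argument is largely bookkeeping, and I do not anticipate any real conceptual obstacle. The only subtlety worth flagging is the reliance, in the reverse direction, on the convention that $\F(\D') \subseteq \F(\D)$: without this, neighbors of $F$ in $\D'$ need not come from $\N_{\D}(F)$, and the chain ordering from $\D$ would not transfer to $\D'$ for free.
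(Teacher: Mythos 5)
Your proof is correct and takes essentially the same route as the paper: the forward direction via three-facet subcomplexes $\l F,F_i,F_j\r$ is exactly the detail the paper leaves implicit ("follows from the definition"), and the converse argument, choosing the neighbour of $F$ in $\D'$ that is earliest in the chain, is the paper's argument verbatim. The only point to add is the degenerate case where $F$ has no neighbours in $\D'$ (so your smallest index $\ell$ does not exist); there $F$ is trivially a leaf, as the paper notes explicitly.
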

    
\begin{proof}
    The `only if' part of the proof follows from the definition of good leaf. Conversely, without loss of generality, assume that $F\cap F_{1}\supseteq F\cap F_{2}\supseteq\dots\supseteq F\cap F_{m}$. Let $\D'$ be a subcomplex of $\Delta$ containing $F$. If $F\cap G=\emptyset$ for each $F\neq G\in\F(\D')$, then $F$ is clearly a leaf of $\D'$. Otherwise, take $l=\mathrm{min}\{i\mid F_i\in \D'\}$. Then we have $F\cap F_l\supseteq F\cap G$ for all $G\in \F(\D')$. Therefore, $F$ is a leaf in $\D'$.
\end{proof}

 For a Cohen-Macaulay simplicial forest, it follows from \Cref{CM forest theorem} that each leaf is a good leaf.

\begin{lemma}\cite{Faridi2005}\label{leaf chain}
    Let $\Delta=\l F_1,\ldots,F_r\r\cup\l G_1,\ldots,G_s\r$ be a Cohen-Macaulay simplicial forest. Then for each $1\leq i\leq r$, $F_i$ is a good leaf of $\Delta$.
\end{lemma}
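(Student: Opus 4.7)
The plan is to invoke \Cref{goof leaf prop} to recast the claim: it suffices to show that for each fixed index $i$, the intersections $\{F_i\cap H : H\in\mathcal{F}(\Delta),\ H\neq F_i\}$ form a chain under inclusion. By property (4) of \Cref{CM SF defn}, the intersections $F_i\cap F_k$ for $k\neq i$ are all empty, so only the intersections of $F_i$ with the joint facets $G_1,\ldots,G_s$ need to be linearly ordered.

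I would proceed by induction on $s$. The base case $s=0$ is immediate since then every relevant intersection is empty. For the inductive step, let $G_{j_0}$ be the joint of $F_i$ guaranteed by the fact that $F_i$ is a leaf of $\Delta$, so $F_i\cap G_{j_0}\supseteq F_i\cap H$ for every facet $H\neq F_i$; this intersection is the intended top of the chain. Form $\Delta''=\Delta\setminus\langle G_{j_0}\rangle$, which by property (5) of \Cref{CM SF defn} is again grafted, and hence Cohen-Macaulay by \Cref{CM forest theorem}. Assuming that $F_i$ still appears as a leaf in the minimal presentation of $\Delta''$, the inductive hypothesis produces a chain among $\{F_i\cap H : H\in\mathcal{F}(\Delta''),\ H\neq F_i\}$, and placing $F_i\cap G_{j_0}$ on top extends it to the required chain in $\Delta$.

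The principal obstacle I foresee is verifying that $F_i$ is still a leaf of $\Delta''$; a priori, the removal of its joint could either strip $F_i$ of any joint in $\Delta''$ or force $F_i$ to appear as a joint facet in the grafted presentation of $\Delta''$. To rule these out, I would exploit property (1) of the grafting applied to $\Delta''$---namely, that every vertex of a joint of $\Delta''$ must lie in some leaf of $\Delta''$---combined with the fact that the other original leaves $F_k$ remain disjoint from $F_i$ in $\Delta''$. Consequently, any leaf of $\Delta''$ whose intersection with $F_i$ is nonempty must either be $F_i$ itself or some former joint $G_l$ that has been promoted to a leaf upon deletion, and a case analysis (possibly aided by choosing $G_{j_0}$ to be the last joint in a suitable pruning order of the $G_j$'s coming from iteratively applying property (5)) identifies a new joint for $F_i$ in $\Delta''$ from among these candidates. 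This structural bookkeeping, which is essentially the content of the corresponding analysis in \cite{Faridi2005}, closes the induction and yields the lemma.
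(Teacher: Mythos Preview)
The paper does not supply its own proof of \Cref{leaf chain}; the lemma is simply attributed to \cite{Faridi2005}. So there is no argument in the paper to compare yours against, and your outline---induction on $s$ via repeated removal of a joint, appealing to \Cref{goof leaf prop} to recast ``good leaf'' as ``the intersections form a chain''---is a reasonable reconstruction of such a proof.

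Your identification of the ``principal obstacle'' is correct, but your proposed resolution is more elaborate than necessary and you leave it partially unresolved (``possibly aided by choosing $G_{j_0}$ to be the last joint in a suitable pruning order''). There is a direct argument: the leaf $F_i$ has a free vertex in $\Delta$, and that vertex remains free in $\Delta''=\Delta\setminus\langle G_{j_0}\rangle$ since deleting facets cannot create new incidences. Now $\Delta''$ is grafted by condition~(5), so in its minimal presentation every facet is either a leaf or one of the $G$-facets; by condition~(1) every vertex of a $G$-facet lies in some leaf. If $F_i$ were a $G$-facet of $\Delta''$, its free vertex would have to lie in a leaf of $\Delta''$ different from $F_i$, which is impossible. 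Hence $F_i$ is a leaf of $\Delta''$, and your induction goes through without any case analysis or special choice of $G_{j_0}$. The same free-vertex argument shows every original $F_k$ remains a leaf of $\Delta''$, so the number of $G$-facets in the grafted presentation of $\Delta''$ is at most $s-1$, which is what the induction actually needs.
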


\begin{remark}\label{rmk: matching number}
For a Cohen-Macaulay simplicial forest $\Delta=\l F_1,\ldots,F_r\r\cup\l G_1,\ldots,G_s\r$, we have $\nu(\Delta)=r$. Indeed, since $F_i\cap F_j=\emptyset$ for each $i\neq j$, we have $\nu(\Delta)\ge r$. On the other hand, if $\{H_1,\ldots,H_m\}$ is a matching of $\Delta$ such that $\nu(\Delta)=m$, then by (1) in \Cref{CM SF defn}, for each $i\in[m]$, there exists some $j_i\in[r]$ such that $H_i\cap F_{j_i}\neq \emptyset$. Our aim is to show that $F_{j_i}\neq F_{j_l}$ for each $i\neq l$. On the contrary, suppose $F_{j_i}=F_{j_l}$. Since $H_i\cap H_l=\emptyset$, we have $H_i,H_l\in\{G_1,\ldots,G_s\}$. Therefore, by \Cref{goof leaf prop} and \Cref{leaf chain}, we have $F_{j_i}\cap H_i\subseteq F_{j_l}\cap H_i$ or $F_{j_l}\cap H_i\subseteq F_{j_i}\cap H_i$, a contradiction to the fact that $H_i\cap H_j=\emptyset$. Thus $m\le r$ and we have $\nu(\Delta)=r$.
\end{remark}

\subsection{Special leaf} While the presence of a good leaf in any simplicial forest is helpful in this context, calculating the depth of square-free powers of facet ideals of Cohen-Macaulay simplicial forests explicitly requires the introduction of a new concept called \textit{special leaf}.

\begin{definition}[Special leaf]
Let $\D$ be a simplicial complex with the set of facets $\F(\D)$. A leaf $F$ of $\D$ is said to be a \textit{special leaf} if the following holds:
    \[
    (H\cap H')\setminus F\neq \emptyset\text{ if and only if }H\cap H'\neq\emptyset,
    \]
    where $H,H'\in\F(\D)\setminus\{F\}$.
\end{definition}

\begin{figure}[h!]
        \centering
        \begin{tikzpicture}
        [scale=.65]
            \path
            (0, 0)   coordinate (x)
            (-0.5, 1) coordinate (y1)
            (0.5, 1) coordinate (y2)
            (1, 0.5)   coordinate (y3)
            (1, -0.5)   coordinate (y4)
            (-0.5, -1)   coordinate (y5)
            (0.5, -1)   coordinate (y6)
            %(-1,  -0.5) coordinate (y7)
            %(-1,  0.5)   coordinate (y8)
            %(-2,  0.5)   coordinate (z)
            (0, -1.5)   coordinate (D1);

            \fill [lightgray]         (x) -- (y1) -- (y2) -- (x) -- cycle;
            \fill [lightgray]           (x) -- (y3) -- (y4) -- (x) -- cycle;
            \fill [lightgray]           (x) -- (y5) -- (y6) -- (x) -- cycle;
           % \fill [lightgray]           (x) -- (y7) -- (y8) -- (x) -- cycle;

            \draw
            (x) -- (y1) -- (y2) -- (x) -- cycle
            (x) -- (y3) -- (y4) -- (x) -- cycle
            (x) -- (y5) -- (y6) -- (x) -- cycle;
            %(z) -- (y8)
            %(x) -- (y7) -- (y8) -- (x) -- cycle;

            \tikzstyle{vertex} = [draw,circle,fill=black,inner sep = 1.5pt]
            \node[vertex] [label = left: $x$] at (x) {};
            \node[vertex] [label = above: $y_1$] at (y1) {};
            \node[vertex] [label = above: $y_2$] at (y2) {};
            \node[vertex] [label = right: $y_3$] at (y3) {};
            \node[vertex] [label = right: $y_4$] at (y4) {};
            \node[vertex] [label = below: $y_5$] at (y5) {};
            \node[vertex] [label = below: $y_6$] at (y6) {};
            %\node[vertex] [label = left: $y_7$] at (y7) {};
            %\node[vertex] [label = left: $y_8$] at (y8) {};
            %\node[vertex] [label = left: $z$] at (z) {};
            \node[] [label = below: $\D_1$] at (D1) {};
        \end{tikzpicture}
        \hspace{6em}
        \begin{tikzpicture}
        [scale=.65]
            \path
            (0.5, 0)   coordinate (x1)
            (-0.5, 0.5)   coordinate (y3)
            (-1.2, 0.5)   coordinate (y4)
            (0.5, -1) coordinate (x2)
            ( -0.5, -0.5) coordinate (x3)
            (1.2, 0.75)   coordinate (y1)
            (1.2, -1.5)   coordinate (y2)
            (-1.2, -0.5)   coordinate (x4)
            (0, -2)   coordinate (D2);

            \fill [lightgray]         (x1) -- (x2) -- (x3) -- (x1) -- cycle;

            \draw
            (x1) -- (x2) -- (x3) -- (x1) -- cycle
            (x1) -- (y1)
            (x3) -- (y3)
            (x4) -- (y4)
            (x2) -- (y2)
            (x3) -- (x4);

            \tikzstyle{vertex} = [draw,circle,fill=black,inner sep = 1.5pt]
            \node[vertex] [label = right: $x_1$] at (x1) {};
            \node[vertex] [label = right: $x_2$] at (x2) {};
            \node[vertex] [label = below: $x_3$] at (x3) {};
            \node[vertex] [label = above: $y_3$] at (y3) {};
             \node[vertex] [label = above: $y_4$] at (y4) {};
            \node[vertex] [label = above: $y_1$] at (y1) {};
            \node[vertex] [label = below: $y_2$] at (y2) {};
            \node[vertex] [label = below: $x_4$] at (x4) {};
            \node[] [label = below: $\D_2$] at (D2) {};
        \end{tikzpicture}
        \caption{Simplicial trees and special leaf}
        \label{Figure 1}
    \end{figure}
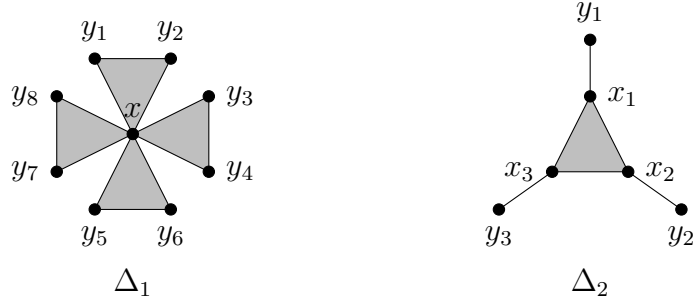
    Unlike a good leaf, a simplicial forest may not contain any special leaf. For instance, no leaf in the simplicial forest $\D_1$ in \Cref{Figure 1} is a special leaf. Indeed, if we set $F_1=\{x,y_1,y_2\}, F_2=\{x,y_3,y_4\}$ and $F_3=\{x,y_5,y_6\}$, then $(F_1\cap F_2)\setminus F_3=\emptyset$, but $F_1\cap F_2\neq\emptyset$ in $\D_1$. Similarly, $(F_1\cap F_3)\setminus F_2=\emptyset$, but $F_1\cap F_3\neq\emptyset$, and $(F_2\cap F_3)\setminus F_1=\emptyset$, but $F_2\cap F_3\neq\emptyset$ in $\D_1$. Now, consider the simplicial complex $\D_2$ in \Cref{Figure 1}, which is a Cohen-Macaulay simplicial tree. Note that the leaves $\{x_1,y_1\}, \{x_2,y_2\}$ and $\{x_4,y_4\}$ are special leaves. However, the leaf $\{x_3,y_3\}$ in $\Delta_2$ is not a special leaf since $\{x_3,x_4\}\cap \{x_1,x_2,x_3\}\neq \emptyset$ but $(\{x_3,x_4\}\cap \{x_1,x_2,x_3\})\setminus\{x_3,y_3\}=\emptyset$. Thus, not every leaf in a Cohen-Macaulay simplicial forest is a special leaf.
    \begin{remark}\label{CM special leaf remark}
        Let $\Delta=\l F_1,\ldots,F_r\r\cup\l G_1,\ldots,G_s\r$ be a Cohen-Macaulay simplicial forest with the set of leaves $\{F_1,\ldots,F_r\}$. Since $F_i\cap F_j=\emptyset$, it is easy to see that a leaf $F$ of $\Delta$ is a special leaf if and only if the following implication holds:
        \[
G_l\cap G_m\neq\emptyset \ \Rightarrow \ (G_l\cap G_m)\setminus F\neq \emptyset,
        \]
    where $l,m\in[s]$ and $l\neq m$.
    \end{remark}

We conclude this section by showing the existence of a special leaf in a Cohen-Macaulay simplicial forest.

\begin{lemma}\label{special leaf existence}
    Let $\Delta$ be a Cohen-Macaulay simplicial forest. Then, $\Delta$ contains at least one special leaf.
\end{lemma}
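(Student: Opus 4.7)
My plan is to prove the lemma by induction on the number $s$ of joints in the minimal presentation $\Delta = \langle F_1, \ldots, F_r\rangle \cup \langle G_1, \ldots, G_s\rangle$. Before invoking the induction, I first simplify the special-leaf condition: combining \Cref{remark 2} with the disjointness of the leaves (condition (4) of \Cref{CM SF defn}), the condition on a leaf $F$ reduces to requiring $G_l \cap G_m \not\subseteq F$ for every pair $l \neq m$ in $[s]$ with $G_l \cap G_m \neq \emptyset$. The reduction uses the fact that any vertex $v \in G_l \cap G_m$ with $v \notin F$ automatically satisfies $v \notin F \cap \Gamma$ for every $\Gamma$, so a single witness suffices for all $\Gamma$. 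When $s \leq 1$, the simplified condition is vacuous and any leaf is special, handling the base case.

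For $s \geq 2$, the inductive step exploits the existence of a good leaf in any simplicial forest (proved in \cite{HHTX1}). Since $\Delta' = \langle G_1, \ldots, G_s\rangle$ is a subcomplex of the simplicial forest $\Delta$, it is itself a simplicial forest and admits a good leaf $G$ with joint $G^*$ in $\Delta'$. By \Cref{goof leaf prop}, the intersections $G \cap G_j$ for $j$ indexing the other joints are totally ordered by inclusion, with maximum $G \cap G^*$. In particular, $V(G) \setminus V(G^*)$ is nonempty and, by the good leaf property, consists entirely of vertices lying in no $G_j$ other than $G$. By grafting condition (1), each such vertex lies in some leaf $F_i$, and this $F_i$ is my candidate for the special leaf.

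To verify $G_l \cap G_m \not\subseteq F$, the check splits into two cases. For pairs $(G, G_m)$ with $m$ distinct from the index of $G$, the chain property yields $G \cap G_m \subseteq G \cap G^* \subseteq V(G^*)$, so a vertex of $V(G \cap G^*)$ outside $F$ (which exists provided $F$ has been chosen to meet $V(G^*)$ only in a restricted way) serves as the witness. For pairs $(G_l, G_m)$ with both $l, m$ distinct from the index of $G$, the intersection lies inside $\Delta \setminus \langle G \rangle$, which by condition (5) of \Cref{CM SF defn} is again a grafted (hence Cohen-Macaulay) simplicial forest with one fewer joint, so the inductive hypothesis supplies a special leaf there.

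The principal technical challenge is ensuring that these two requirements are simultaneously met by a single leaf $F$: a naive choice of $F_i$ containing a vertex of $V(G) \setminus V(G^*)$ may still pick up problematic vertices from $V(G^*)$ or from intersections of joint pairs not involving $G$. Resolving this requires a careful structural analysis of how vertices of $V(\Delta')$ distribute among the leaves, exploiting the partition $V(\Delta') \subseteq \sqcup_i V(F_i)$ coming from conditions (1) and (4) of grafting, and then matching the candidate $F$ with the inductive special leaf of $\Delta \setminus \langle G \rangle$. Carrying out this combinatorial compatibility argument is the crux of the proof.
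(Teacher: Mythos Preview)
Your simplification of the special leaf condition to ``$G_l\cap G_m\not\subseteq F$ whenever $G_l\cap G_m\neq\emptyset$'' is correct and is also implicit in the paper's argument. However, the rest of the proposal has a genuine gap that you yourself flag as ``the crux'': your inductive hypothesis on $\Delta\setminus\langle G\rangle$ produces some special leaf $F^\ast$ there, while your direct argument for pairs $(G,G_m)$ singles out a (possibly different) leaf $F_i$. Nothing you have written forces $F^\ast=F_i$, and in general these two leaves will disagree. Without that reconciliation, the induction simply does not close; calling it a ``careful structural analysis'' is not a proof. There is also a smaller slip in your witness for pairs $(G,G_m)$: you propose a vertex of $G\cap G^\ast$ outside $F$, but since $G\cap G_m\subseteq G\cap G^\ast$, such a vertex need not lie in $G\cap G_m$, so it need not witness $G\cap G_m\not\subseteq F$.

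The paper avoids induction entirely. After picking a good leaf $G_1$ of $\Delta'$ with chain $G_1\cap G_2\supseteq\cdots\supseteq G_1\cap G_t$, it takes any leaf $F$ of $\Delta$ meeting $G_1$ and first proves $\N_\Delta(F)\subseteq\N_{\Delta'}[G_1]$. The key observation you are missing is to look at the \emph{smallest} intersection $G_1\cap G_t$: if $F\nsupseteq G_1\cap G_t$, then a single vertex $x\in (G_1\cap G_t)\setminus F$ lies in $G_l\cap G_m$ for \emph{every} pair $G_l,G_m\in\N_{\Delta'}[G_1]$ (by the chain), so $x$ witnesses $G_l\cap G_m\not\subseteq F$ for all such pairs at once; pairs with a member outside $\N_{\Delta'}[G_1]$ are handled trivially since then $G_l\cap G_m$ already misses $F$. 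If instead $F\supseteq G_1\cap G_t$, one picks another leaf $F'\neq F$ meeting $G_1$ (it exists because $G_1\not\subseteq F$), and $F\cap F'=\emptyset$ forces $F'\nsupseteq G_1\cap G_t$, reducing to the previous case. This single-witness idea is what replaces your attempted induction and makes the two halves of the verification compatible automatically.
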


\begin{proof}
    Let $\Delta=\l F_1,\ldots,F_r\r\cup\l G_1,\ldots,G_s\r$ be a Cohen-Macaulay simplicial forest, and set $\D'=\l G_1,\ldots,G_s\r$. Take any good leaf of $\D'$, say $G_1\in \F(\D')$. Without loss of generality, let $\N_{\D'}[G_1]=\{G_1,\ldots , G_t\}$ for some $t\leq s$. Since $G_1$ is a good leaf, by \Cref{goof leaf prop}, we can assume, without loss of generality, that $G_1\cap G_2\supseteq G_1\cap G_3\supseteq \cdots \supseteq G_1\cap G_t$. Now, take any leaf $F$ of $\D$ such that $F\cap G_1\neq \emptyset$. We first observe that $\N_{\D}(F)\subseteq \N_{\D'}[G_1]$. Indeed, if $F\cap G_j\neq \emptyset$ for some $j\in [s]\setminus\{1\}$, then since $F$ is a good leaf of $\D$, we have either $F\cap G_1\supseteq F\cap G_j\neq \emptyset$, or $F\cap G_j\supseteq F\cap G_1\neq \emptyset$. Thus in any case, $G_1\cap G_j\neq \emptyset$, which in turn, shows that $G_j\in \N_{\D'}[G_1]$.
    
    First, we consider the case when $F\nsupseteq G_1\cap G_t$. We claim that, in this case, $F$ is a special leaf. Note that, by \Cref{CM special leaf remark}, it is enough to show that if $G_l\cap G_m\neq \emptyset$ for some distinct $l,m\in [s]$, then $(G_l\cap G_m)\setminus F\neq \emptyset$. 
    
    \noindent \textbf{Case-I:} Either $G_l\notin \N_{\D'}[G_1]$, or $G_m\notin \N_{\D'}[G_1]$. Without loss of generality, assume $G_l\notin \N_{\D'}[G_1]$. Then we have $G_l\cap F=\emptyset$ since $\N_{\D}(F)\subseteq \N_{\D'}[G_1]$. Hence, $(G_l\cap G_m)\cap F=\emptyset$, which in turn, implies that $(G_l\cap G_m)\setminus F\neq \emptyset$, where $i\in [s]$.
    
     \noindent \textbf{Case-II:} $G_l,G_m\in \N_{\D'}[G_1]$. Then, since $G_1$ is a good leaf in $\D'$, without loss of generality, we can assume that $G_1\cap G_l\supseteq G_1\cap G_m\supseteq G_1\cap G_t$. Since $F\nsupseteq G_1\cap G_t$, there is some $x\in G_1\cap G_t$ such that $x\notin F$. Observe that $x\in G_l\cap G_m$, and $x\notin F$. This shows that $x\in (G_l\cap G_m)\setminus F$, where $i\in [s]$. This completes the proof of the claim.

     \noindent
     Finally, if $F\supseteq G_1\cap G_t$, then take any other leaf $F'\neq F$ of $\D$ such that $F'\cap G_1\neq \emptyset$. Such a leaf $F'$ exists since $G_1$ is a facet of $\D$. In this case, proceeding as before, we have $\N_{\D}(F')\subseteq \N_{\D'}[G_1]$. Now since $F\cap F'=\emptyset$, we must have $F'\nsupseteq G_1\cap G_t$. Then, we can proceed in a similar way as in the previous case to show that $F'$ is a special leaf of $\D$. Thus, $\D$ contains at least one special leaf.
\end{proof}

\section{Square-free Powers of Cohen-Macaulay Simplicial Forests}\label{Section 3}

In this section, we explicitly derive the dimension and depth of square-free powers of the facet ideal of a Cohen-Macaulay simplicial forest. As a consequence, we get our main result, which states that all the square-free powers of a Cohen-Macaulay simplicial forest are also Cohen-Macaulay. Additionally, we prove that the normalized depth function of such an ideal is nonincreasing.

Before going to the main content of this section, we clarify some notations to avoid any confusion for the reader. For a square-free monomial ideal $I$, by $I^{[k]}$ we mean the $k^{\text{th}}$ square-free power of $I$, whereas $i\in[k]$ means the element $i$ is in the set $[k]:=\{1,2,\ldots,k\}$.

We begin with the following lemma, which shows that for the square-free powers of the facet ideal of a Cohen-Macaulay simplicial forest, taking colon by the monomial associated with a leaf behaves well. Here we assume that $I(\D)^{[k]}=R$ for each $k\le 0$.

\begin{lemma}\label{colon lemma}
    Let $\Delta$ be a Cohen-Macaulay simplicial forest, and let $F$ be a leaf of $\Delta$. Then for all $1\le k\le \nu(\Delta)$,
    \[
    (I(\Delta)^{[k]}:\x_F)=I(\Delta_1)^{[k-1]},
    \]
    where $\F(\Delta_1)=\F(\Delta)\setminus \N_{\Delta}[F]$. Moreover, $\Delta_1$ is also a Cohen-Macaulay simplicial forest.
\end{lemma}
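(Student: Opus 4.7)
The plan is to prove the colon identity and the Cohen--Macaulayness of $\D_1$ separately. Because $I(\D)^{[k]}$ is a monomial ideal with minimal generators $\x_{H_1\cup\cdots\cup H_k}$ indexed by the $k$-matchings of $\D$, the colon $(I(\D)^{[k]}:\x_F)$ is monomially generated by the elements $\x_{(H_1\cup\cdots\cup H_k)\setminus F}$. I would compare this generating set with the one for $I(\D_1)^{[k-1]}$, whose generators are $\x_{K_1\cup\cdots\cup K_{k-1}}$ for $(k-1)$-matchings $\{K_1,\dots,K_{k-1}\}$ of $\D_1$. The inclusion $\supseteq$ is immediate: any $(k-1)$-matching of $\D_1$ has each $K_i\notin\N_\D[F]$, so $K_i\cap F=\emptyset$, and thus $\{F,K_1,\dots,K_{k-1}\}$ is a $k$-matching of $\D$ whose associated generator equals $\x_F\cdot\x_{K_1\cup\cdots\cup K_{k-1}}$.

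For the reverse inclusion I would fix a $k$-matching $M=\{H_1,\dots,H_k\}$ of $\D$ and produce a $(k-1)$-matching $M'\subseteq\F(\D_1)$ with $\bigcup_{K\in M'}K\subseteq(H_1\cup\cdots\cup H_k)\setminus F$. If $F\in M$, take $M'=M\setminus\{F\}$; pairwise disjointness ensures $M'\subseteq\F(\D_1)$. Otherwise, set $S=\{i:H_i\cap F\ne\emptyset\}$; the critical claim is $|S|\le 1$. Indeed, by \Cref{leaf chain} the leaf $F$ is a good leaf, so by \Cref{goof leaf prop} its neighbours in $\D$ are totally ordered by the relation $F\cap\cdot$; consequently the sets $F\cap H_i$ with $i\in S$ are simultaneously a chain under inclusion and pairwise disjoint non-empty subsets of $F$, which forces $|S|\le 1$. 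Deleting the unique $H_i$ with $i\in S$ (or any one $H_i$ if $S=\emptyset$) leaves a $(k-1)$-matching contained in $\F(\D_1)$, and its associated generator of $I(\D_1)^{[k-1]}$ divides $\x_{(H_1\cup\cdots\cup H_k)\setminus F}$. This chain-plus-disjointness observation is the main combinatorial content and the step I expect to require the most care in phrasing.

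For the \emph{moreover} statement, I would start from the minimal grafted presentation $\D=\l F_1,\dots,F_r\r\cup\l G_1,\dots,G_s\r$ with $F=F_1$. Axiom (4) of \Cref{CM SF defn} (leaves are pairwise disjoint) gives $\N_\D[F_1]=\{F_1,G_{j_1},\dots,G_{j_p}\}$, where the $G_{j_\ell}$ are precisely the non-leaves of $\D$ meeting $F_1$. Iterating axiom (5) to remove $G_{j_1},\dots,G_{j_p}$ one at a time yields a grafted simplicial complex $\D':=\D\setminus\l G_{j_1},\dots,G_{j_p}\r$ in which $F_1$ has become isolated, so $\D'=\D_1\sqcup\l F_1\r$ on disjoint vertex sets. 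The Stanley--Reisner ring therefore factors as a tensor product, and since the factor coming from $\l F_1\r$ is Cohen--Macaulay, so is $R/I(\D_1)$; combined with the fact (recorded after \Cref{CM SF defn}) that a subcomplex of a simplicial forest is a simplicial forest, this gives the claim. The delicate point is justifying the iterated application of axiom (5): each $G_{j_{\ell+1}}$ must still belong to the non-leaf part of the grafted presentation of the intermediate complex. I would verify this inductively using the totally ordered structure of the neighbourhood of $F_1$, or alternatively defer it to the companion lemma \Cref{contraction CM}.
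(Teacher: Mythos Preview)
Your proposal is correct and follows essentially the same approach as the paper's proof: the colon identity is handled by the same case analysis (with the key step being that the good-leaf chain forces $|S|\le 1$), and the Cohen--Macaulayness of $\D_1$ is obtained by iterating axiom~(5) of \Cref{CM SF defn} along the totally ordered neighbourhood of $F_1$ until $F_1$ becomes isolated. Two minor remarks: what you call the ``Stanley--Reisner ring'' is the facet-ideal quotient $R/I(\D')$ (the argument is the same), and your suggested fallback of deferring the delicate point to \Cref{contraction CM} would be circular in the paper's logical order---stick with the chain argument, noting that no $G_{j_\ell}$ can become a leaf after deletions because all its vertices lie in $\bigcup F_i$ by condition~(1).
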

\begin{proof}
    The $k=1$ case is easy to observe. Therefore, we may assume that $k\ge 2$. Let $\D=\l F_1,\ldots,F_r\r\cup\l G_1,\ldots,G_s\r$. Without loss of generality, assume that $F=F_1$, and $\N_{\Delta}(F_1)=\{G_1,\ldots,G_t\}$ for some $t\le s$. Take a minimal generator $\prod_{i=1}^k\x_{H_i}$ in $I(\Delta)^{[k]}$. Then $\{H_1,\ldots,H_k\}\subseteq \F(\Delta)$ forms a $k$-matching in $\Delta$. If $H_j=F_1$ for some $j\in[k]$, then $H_l\notin \N_{\Delta}(F_1)$ for each $l\in[k]\setminus\{ j\}$. Thus, in this case, we have
    \[
     \frac{\prod_{i=1}^k\x_{H_i}}{\gcd(\x_{F_1}, \prod_{i=1}^k\x_{H_i})}=\prod_{\underset{i\neq j}{i=1}}^k\x_{H_i}\in I(\Delta_1)^{[k-1]}.
    \]
    Thus, we may assume that $H_j\neq F_1$ for each $j\in[k]$. Now suppose $H_j\notin \N_{\Delta}(F_1)$ for each $j\in [k]$. Then, it is easy to see that
\[
    \frac{\prod_{i=1}^k\x_{H_i}}{\gcd(\x_{F_1}, \prod_{i=1}^k\x_{H_i})}=\prod_{i=1}^k\x_{H_i} \in I(\Delta_1)^{[k]}\subseteq I(\Delta_1)^{[k-1]}.
    \]
    Next, consider the case when $H_j\in \N_{\Delta}(F_1)$ for some $j\in[k]$. Observe that if $H_l\in \N_{\Delta}(F_1)$ for some $l\in[k]$ with $l\neq j$, then by \Cref{leaf chain}, we have either $H_l\cap F_1\subseteq H_j\cap F_1$ or $H_j\cap F_1\subseteq H_l\cap F_1$. In particular, $H_l\cap H_j\neq \emptyset$, a contradiction to the fact that $\{H_1,\ldots,H_k\}$ forms a $k$-matching in $\Delta$. Thus $H_l\notin \N_{\Delta}[F_1]$ for each $l\in[k]\setminus\{j\}$, and hence, $\prod_{\underset{i\neq j}{i=1}}^k\x_{H_i}\in I(\Delta_1)^{[k-1]}$. Consequently,
    \[
    \frac{\prod_{i=1}^k\x_{H_i}}{\gcd(\x_{F_1}, \prod_{i=1}^k\x_{H_i})}=(\x_{H_j\setminus F_1})\cdot \left(\prod_{\underset{i\neq j}{i=1}}^k\x_{H_i}\right)\in I(\Delta_1)^{[k-1]}.
    \]
Thus considering all the above cases, we have $(I(\Delta)^{[k]}:\x_{F_1})\subseteq I(\Delta_1)^{[k-1]}$. On the other hand, since $\F(\Delta_1)=\F(\Delta)\setminus \N_{\Delta}[F_1]$, we observe that if $H_1,\ldots,H_{k-1}\in\F(\D_1)$ forms a $(k-1)$-matching in $\D_1$, then $H_1,\ldots,H_{k-1},F_1$ forms a $k$-matching in $\D$. Hence, $I(\Delta_1)^{[k-1]}\subseteq (I(\Delta)^{[k]}:\x_{F_1})$. This completes the proof of the first part of the lemma.

It now remains to show that the subcomplex $\D_1=\l F_2,F_3,\ldots , F_r\r\cup \l G_{t+1}, G_{t+2},\ldots , G_s\r$ is also a Cohen-Macaulay simplicial forest. Since $\D_1$ is a subcomplex of $\D$, $\D_1$ is again a simplicial forest. Thus, it is enough to show that $\D_1$ is a grafted simplicial complex. By \Cref{leaf chain}, as $F_1$ is a good leaf, without loss of generality, we can assume that $F_1\cap G_1\supseteq F_1\cap G_2\supseteq \cdots \supseteq F_1\cap G_t$. Note that $G_1$ is a joint of $F_1$, and hence, $\D\setminus \l G_1\r$ is also grafted. Now, $G_2$ is a joint of $F$ in $\D\setminus \l G_1\r$ and then $\D\setminus \l G_1,G_2\r$ is grafted. Continuing in this way, we finally obtain that $\D\setminus \l G_1,G_2,\ldots , G_t\r$ is grafted. Observe that $\D\setminus \l G_1,G_2,\ldots , G_t\r=\l F_1\r\cup \D_1$, where $F_1\cap V(\D_1)=\emptyset$. Thus, $\D_1$ is also grafted, and this completes the proof.
\end{proof}

The notion of contraction of a simplicial complex is well-studied in the literature. We recall the definition and prove an auxiliary lemma below that will be essential in the computation of depth.

\begin{definition}[Contraction]
    Let $\Delta$ be a simplicial complex with $\F(\Delta)=\{F_1,\ldots,F_l\}$ and let $A\subseteq V(\Delta)$. The \textit{contraction of $\D$ on $A$} is a simplicial complex $\D_A$ on the vertex set $V(\D)\setminus A$ whose facets are given by
    \[
        \F(\D_A)=\{F_i\setminus A\mid F_i\in\F(\Delta)\text{ and } F_i\setminus A\nsupseteq F_j\setminus A\text{ for each }j\neq i \}.
    \]
\end{definition}

The following lemma is needed in the proof of \Cref{contraction CM}.

\begin{lemma}\label{colon cohen macaulay}
    Let $I$ be a monomial ideal in a polynomial ring $R$ and let $m\notin I$ be a monomial in $R$. If $R/I$ is Cohen-Macaulay, then so is $R/(I:m)$.
\end{lemma}
\begin{proof}
    By \cite[Corollary 1.3]{Rauf2010} we have $\d(R/I)\le \d(R/(I:m))$. On the other hand, $\d(R/(I:m))\le \dim(R/(I:m))\le \dim(R/I)=\d(R/I)$ since $R/I$ is Cohen-Macaulay. Thus $\d(R/(I:m))=\dim(R/(I:m))$ and hence $R/(I:m)$ is Cohen-Macaulay.
\end{proof}
Given a Cohen-Macaulay simplicial forest $\D$, one can derive that $\D_A$ is also a Cohen-Macaulay simplicial forest. However, in order to determine the grafting structure in $\D_A$, one needs to carefully use the grafting structure in $\D$. In the next lemma, we have explicitly determined the grafting structure of a special type of contraction in a Cohen-Macaulay simplicial forest that will be essential in the computation of depth in \Cref{main theorem 1}. 

\begin{lemma}\label{contraction CM}
    Let $\Delta=\l F_1,\ldots,F_r\r\cup\l G_1,\ldots,G_s\r$ be a Cohen-Macaulay simplicial forest. Take $A_{F_i}\subseteq \bigcap_{F_i\cap G_j\neq\emptyset}(F_i\cap G_j)$. Then $\Delta_{A_{F_i}}$ is also a Cohen-Macaulay simplicial forest. Moreover, the structure of $\D_{A_{F_i}}$ can be explicitly determined. 
\end{lemma}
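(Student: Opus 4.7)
The plan is to first give an explicit description of $\Delta_{A_{F_i}}$ and then verify directly that it satisfies the grafting axioms of \Cref{CM SF defn}, so that Cohen--Macaulayness follows from \Cref{CM forest theorem}.

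The key initial observation is that $A_{F_i}$ touches only $F_i$ itself and the neighbors of $F_i$. Indeed, since $A_{F_i}\subseteq F_i$ and $F_i\cap F_k=\emptyset$ for $k\neq i$ by grafting axiom~(4), we have $A_{F_i}\cap F_k=\emptyset$; and for any $G_l\notin\N_\Delta(F_i)$, $F_i\cap G_l=\emptyset$, hence $A_{F_i}\cap G_l=\emptyset$. So the contraction leaves every $F_k$ ($k\neq i$) and every non-neighbor $G_l$ unchanged, and trims only $F_i$ to $F_i\setminus A_{F_i}$ and each neighbor $G_j$ to $G_j\setminus A_{F_i}$. By \Cref{leaf chain} and \Cref{goof leaf prop}, after reordering I may assume $F_i\cap G_1\supseteq\cdots\supseteq F_i\cap G_t$, where $G_1,\ldots,G_t$ are the neighbors of $F_i$, and $A_{F_i}\subseteq F_i\cap G_t$. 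Because $F_i$ is a leaf, it contains a free vertex, which survives in $F_i\setminus A_{F_i}$; hence that facet is not absorbed, and similarly the untouched $F_k$'s and non-neighbor $G_l$'s are not absorbed. The only possible absorption is of some $G_j\setminus A_{F_i}$ into another facet, which I would determine by case analysis driven by the good-leaf chain; this yields the explicit facet list promised by the ``moreover'' clause.

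To show $\Delta_{A_{F_i}}$ is a simplicial forest, I would take any nonempty subcomplex $\Gamma$, lift it to a subcomplex $\widetilde{\Gamma}$ of $\Delta$ by choosing a preimage of each trimmed facet, and extract a leaf $H$ of $\widetilde{\Gamma}$; since $A_{F_i}$ lies in every neighbor of $F_i$, trimming preserves the $\supseteq$-relations among intersections that certify $H$ as a leaf, so its image in $\Gamma$ remains a leaf. For grafting, I designate the leaf set $\{F_i\setminus A_{F_i}\}\cup\{F_k : k\neq i\}$: axioms~(1), (3), (4) of \Cref{CM SF defn} transfer from $\Delta$ verbatim (leaves remain pairwise disjoint because $F_i\setminus A_{F_i}\subseteq F_i$, and every vertex still lies in some designated leaf). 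Axiom~(5) follows by induction on $s$, since removing a joint $G_m$ in $\Delta_{A_{F_i}}$ is the contraction by $A_{F_i}$ of $\Delta\setminus\langle G_m\rangle$, which is grafted by axiom~(5) applied to $\Delta$.

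The principal obstacle is axiom~(2): ruling out that some trimmed neighbor $G_j\setminus A_{F_i}$ accidentally becomes a leaf of $\Delta_{A_{F_i}}$. This can indeed occur in principle; for example, with $F_1=\{a,x\}$, $G_1=\{x,y\}$, $F_2=\{y,b\}$ and $A_{F_1}=\{x\}$, the trimmed $G_1\setminus\{x\}=\{y\}\subsetneq F_2$, but then $G_1\setminus\{x\}$ is actually \emph{absorbed} and does not appear in $\F(\Delta_{A_{F_i}})$ at all. I expect the good-leaf chain $F_i\cap G_1\supseteq\cdots\supseteq F_i\cap G_t$ together with $A_{F_i}\subseteq F_i\cap G_t$ to support a clean case analysis showing that whenever a trimmed $G_j\setminus A_{F_i}$ would become a new leaf, it is in fact absorbed into another facet; this simultaneously verifies axiom~(2) and produces the explicit structural description of $\Delta_{A_{F_i}}$.
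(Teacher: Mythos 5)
There is a genuine gap, and it stems from reading the contraction the wrong way around. By definition, $\F(\D_A)$ consists of the \emph{inclusion-minimal} trimmed sets: a facet $H$ of $\D$ is discarded precisely when $H\setminus A$ \emph{contains} some other trimmed facet (this matches $I(\D_A)=(I(\D):\x_A)$, whose generators are the minimal monomials $\x_{H\setminus A}$). Consequently a trimmed neighbor $G_j\setminus A_{F_i}$ is never ``absorbed into another facet''; it always survives, and it is the \emph{larger} facets containing it --- in particular, leaves $F_t$ with $F_t\supseteq G_j\setminus A_{F_i}$ --- that disappear from $\F(\D_{A_{F_i}})$. Your own toy example gets this backwards: for $F_1=\{a,x\}$, $G_1=\{x,y\}$, $F_2=\{y,b\}$, $A_{F_1}=\{x\}$, one has $I(\D):x=(a,y)$, so $\F(\D_{A_{F_1}})=\{\{a\},\{y\}\}$; it is $F_2$ that vanishes, while $G_1\setminus\{x\}=\{y\}$ remains and \emph{is} a leaf. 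This breaks your plan at its core: the designated leaf set $\{F_i\setminus A_{F_i}\}\cup\{F_k: k\neq i\}$ need not even consist of facets of $\D_{A_{F_i}}$, so conditions (1)--(4) of \Cref{CM SF defn} do not ``transfer verbatim'' (a forest vertex previously covered only by a now-deleted $F_t$ is no longer covered by your leaves), and your proposed resolution of condition (2) --- that any trimmed $G_j\setminus A_{F_i}$ threatening to become a leaf is absorbed --- is exactly the false statement. The correct structure, which the paper works out, is that whenever a leaf $F_t$ contains some $G_{j}\setminus A_{F_i}$ (necessarily a unique such $j$, and distinct $t$ give distinct $j$, by \Cref{leaf chain}), the facet $F_t$ is deleted and the trimmed neighbor $G_{j}\setminus A_{F_i}$ is \emph{promoted to a leaf} of the new grafting; the grafting of $\D_{A_{F_i}}$ is by $F_i\setminus A_{F_i}$, the surviving old leaves, and these promoted sets. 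This is not a marginal case: it is precisely what happens when $A_{F_i}=\bigcap_{F_i\cap G_j\neq\emptyset}(F_i\cap G_j)$, the case used later in the depth computation.

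Two further remarks. First, for the Cohen--Macaulayness assertion itself you do not need the grafting analysis at all: since $I(\D_{A_{F_i}})=(I(\D):\x_{A_{F_i}})$, the inequality $\d(R/(I(\D):\x_{A_{F_i}}))\ge\d(R/I(\D))$ (Rauf) together with $\dim(R/(I(\D):\x_{A_{F_i}}))\le\dim(R/I(\D))$ sandwiches depth and dimension, which is the paper's one-paragraph argument; the detailed grafting description is needed only for the ``moreover'' part (explicit leaf set, later used to transfer the special leaf). Second, your lifting argument for the forest property and your induction for condition (5) are plausible in outline and close to the paper's Claims 1--2, but they must be redone with the corrected facet list; as written, the absorption direction error propagates into them.
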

\begin{proof}
Note that $A_{F_i}$ is not a facet of $\Delta$ and thus $\x_{A_{F_i}}\notin I(\Delta)$. Since $R/I(\Delta)$ is Cohen-Macaulay, by \Cref{colon cohen macaulay}, we have that $R/(I(\D):\x_{A_{F_i}})$ is also Cohen-Macaulay. Observe that $I(\Delta_{A_{F_i}})=(I(\D):\x_{A_{F_i}})$. Thus $R/I(\Delta_{A_{F_i}})$ is Cohen-Macaulay. We now proceed to explicitly determine the structure of $\D_{A_{F_i}}$ and show that $\D_{A_{F_i}}$ is a grafting of some simplicial forest.
     
     %The proof is by induction on $|V(\D')|$. If $|V(\D')|\le 3$, then it is easy to see that $\D_{A_{F_i}}$ is always a Cohen-Macaulay simplicial forest. Therefore, we may assume that $|V(\Delta')|\ge 4$. 
     
     Let $\Delta=\l F_1,\ldots,F_r\r\cup\l G_1,\ldots,G_s\r$ with $\Delta'=\l  G_1,\ldots,G_s\r$ and without loss of generality, we take $F_i=F_1$. If $A_{F_1}=\emptyset$, then $\Delta_{A_{F_1}}=\Delta$ and thus $\D_{A_{F_1}}$ is a grafting of $\D'$. Therefore, we may assume that $A_{F_1}\neq \emptyset$. For simplicity of notation, let us write $A=A_{F_1}$. We now describe a procedure to find a minimal presentation of $\Delta_{A}$ as follows.\par

     \begin{itemize}
         \item[$\bullet$] Without loss of generality, let us assume that $\N_{\D}(F_1)=\{G_1,\ldots,G_n\}$ for some $n\leq s$. Observe that $G_l\setminus A=G_l$ for each $l>n$, and $F_j\setminus A=F_j$ for each $j\neq 1$. Moreover, for each distinct $i,j\in [n]$, $G_i\setminus A\nsupseteq G_j\setminus A$. Furthermore, for any $i\in [n]$, $F_j\setminus A\nsubseteq G_i\setminus A$ and $G_l\setminus A\nsubseteq G_i\setminus A$, where $j\in[r]$ and $l>n$. Thus, $G_i\setminus A\in\F(\Delta_{A})$ for each $i\in[n]$. 
         
         \item[$\bullet$] Without loss of generality, let for each $n+1\le l\le m$, $G_l=G_l\setminus A\supseteq G_i\setminus A$ for some $i\in [n]$; and for each $l>m$ and $i\in [n]$, $G_l=G_l\setminus A\nsupseteq G_i\setminus A$, where $m>n$ is an integer. In this case, we have $G_l=G_l\setminus A\notin\F(\Delta_{A})$ for each $n+1\le l\le m$.

         \item[$\bullet$] Since $G_l\setminus A\nsupseteq F_i\setminus A$ for each $i\in[r]$, we see that $G_l\setminus A\in\F(\Delta_{A})$ for each $l>m$.

         \item[$\bullet$] Observe that $F_1\setminus A\nsupseteq G_i\setminus A$ for every $i\in[s]$, and hence, $F_1\setminus A\in \F(\D_A)$.

         \item[$\bullet$] Without loss of generality, let $2\le q\le r$ be an integer such that for each $2\le t\le q$, $F_t\setminus A\supseteq G_i\setminus A$ for some $i\in[n]$; and for each $t>q$ and $i\in [n]$, $F_t\setminus A\nsupseteq G_i\setminus A$. In this case, $F_t\setminus A\notin \F(\Delta_{A})$ for each $2\leq t\le q$.

         \item[$\bullet$] Moreover, since for each $j\in [r]$, $F_j\setminus A\nsupseteq G_l\setminus A=G_l$ for each $l>m$, we have that $F_t=F_t\setminus A\in\F(\Delta_{A})$ for each $t>q$.
     \end{itemize}
     
           Next, we proceed to show that for each $2\le t\le q$, there exists a unique $i_t\in [n]$ such that $F_t\setminus A\supseteq G_{i_t}\setminus A$. 
           
           \begin{itemize}
               \item[$\bullet$] Observe that $F_t\setminus A=F_t$ and if $F_t\supseteq G_{i_t}\setminus A$ and $F_t\supseteq G_{j_t}\setminus A$ for two distinct $i_t,j_t\in[n]$, then $F_t\cap G_{i_t}=G_{i_t}\setminus A$ and $F_t\cap G_{j_t}=G_{j_t}\setminus A$. Hence, by \Cref{leaf chain}, either $G_{i_t}\setminus A\subseteq G_{j_t}\setminus A$ or $G_{i_t}\setminus A\supseteq G_{j_t}\setminus A$. Thus, either $G_{i_t}\subseteq G_{j_t}$ or $G_{i_t}\supseteq G_{j_t}$, a contradiction to the fact that $G_i\in\F(\Delta)$ for each $i\in[n]$. Also, since $F_i\cap F_j=\emptyset$ for each $i\neq j$, we see that if for some $p\in [n]$, $G_p\setminus A\subseteq F_i$ and $G_p\setminus A\subseteq F_j$, then we must have $i=j$.
           \end{itemize}
            Without loss of generality, we may assume that $F_t\setminus A\supseteq G_{t-1}\setminus A$ for each $2\le t\le q$. Then there exists non-negative integers $q,n$ and $m$ such that $ q\le \min\{r-1,n\}$, $ n\le s$, $n< m\le s$, and the minimal presentation of $\Delta_{A}$ is 
    \begin{align}\label{equation 52}
    \Delta_{A}=\l F_1\setminus A,G_1\setminus A,\ldots,G_{q-1}\setminus A,F_{q+1},\ldots,F_r,G_{q}\setminus A,\ldots,G_n\setminus A,G_{m+1},\ldots,G_s\r.
    \end{align}
    Here we remark that if $A\subsetneq \bigcap_{F_1\cap G_j\neq\emptyset}(F_1\cap G_j)$, then no such $q$ exists. For simplicity of notation, let us write $F'_i=F_i\setminus A$ and $G'_i=G_i\setminus A$ for all $i$. We now proceed to show that $\Delta_A$ is a grafted simplicial forest.
    
    \noindent
\textbf{Claim 1}: The simplicial complex $\Delta'_{A}=\l G'_{q},\ldots,G'_n,G'_{m+1},\ldots,G'_s\r$ is a simplicial forest.

\noindent
\textit{Proof of Claim 1}: Let $\{G'_{j_1},\ldots,G'_{j_t}\}\subseteq \{G'_{q},\ldots,G'_n,G'_{m+1},\ldots,G'_{s}\}$ be an arbitrary collection of facets of $\D'_A$. We have to show the subcomplex $\l G'_{j_1},\ldots,G'_{j_t}\r$ of $\D'_A$ has a leaf. Consider the simplicial complex $\l G_{j_1},\ldots,G_{j_t}\r$, which is a subcomplex of $\D'$. Since $\D'$ is a simplicial forest, the subcomplex $\l G_{j_1},\ldots,G_{j_t}\r$ has a leaf, say $G_{j_1}$. Then there exists $i\in\{2\ldots, t\}$ such that $G_{j_1}\cap G_{j_{i}}\supseteq G_{j_1}\cap G_{j_k}$ for all $k\in\{2,\ldots,t\}\setminus\{i\}$. This implies $G'_{j_1}\cap G'_{j_{i}}\supseteq G'_{j_1}\cap G'_{j_k}$ for all $k\in\{2,\ldots,t\}\setminus\{i\}$. Hence, $G'_{j_1}$ is a leaf of the subcomplex $\l G'_{j_1},\ldots,G'_{j_t}\r$ of $\D'_A$, and consequently, $\D'_A$ is a simplicial forest.

\noindent
\textbf{Claim 2}: $\D_A$ is a grafting of $\D'_A$ with the simplices $F'_1,G'_1,\ldots,G'_{q-1},F'_{q+1},\ldots,F'_r$.

\noindent
\textit{Proof of Claim 2}: In order to prove Claim 2, we verify conditions (1)-(5) in \Cref{CM SF defn}. Since for any two leaves $F_i$ and $F_j$ of $\Delta$, $F_i\cap F_j=\emptyset$, we observe that for each $a\in\{1,q+1,\ldots,r\}$ and $b\in\{1,\ldots,q-1\}$, $F_a'\cap G_b'=\emptyset$. Moreover, it is easy to see that for each $i,j\in\{1,q+1,\ldots,r\}$, $F_i'\cap F_j'=\emptyset$. Furthermore, for each $a,b\in [q-1]$, $G_a'\cap G_b'=\emptyset$ since $G_a'\subseteq F_{a+1}$ and $G_b'\subseteq F_{b+1}$. Thus, the condition (4) in \Cref{CM SF defn} is verified. Next, for each $i\in\{1,q+1,\ldots, r\}$, using \Cref{leaf chain}, we obtain a chain of subsets
\[
F_i\cap G_{i_1}\supseteq F_i\cap G_{i_2}\supseteq\cdots\supseteq F_i\cap G_{i_l},
\]
where $\N_{\D}(F_i)=\{G_{i_1},\ldots, G_{i_l}\}$. Consequently, there exists a subset $\{j_1,\ldots,j_p\}\subseteq \{i_1,\ldots,i_l\}$ such that 
\begin{align}\label{equation 1}
F_i'\cap G_{j_1'}\supseteq F_i'\cap G_{j_2'}\supseteq\cdots\supseteq F_i'\cap G_{j_p}',
\end{align}
where $\N_{\Delta_A}(F_i')=\{G_{j_1}',\ldots,G_{j_p}'\}$. Thus for each $i\in\{1,q+1,\ldots,r\}$, $F_i'$ is a leaf of $\Delta_A$. Now fix some $i\in[q-1]$ so that $F_{i+1}\supseteq G_i'$. Observe that if $G_j'\in \N_{\Delta_A}(G_i')$, then $j\in\{q,\ldots,n,m+1,\ldots,s\}$ and $G_j\in \N_{\D}(F_{i+1})$. By \Cref{leaf chain}, we again have a chain of subsets
\[
F_{i+1}\cap G_{l_1}\supseteq F_{i+1}\cap G_{l_2}\supseteq\dots\supseteq F_{i+1}\cap G_{l_k},
\]
where $\N_{\D}(F_{i+1})=\{G_{l_1},\ldots,G_{l_k}\}$, and $G_{l_a}=G_i$ for some $a\in[k]$. Consequently, we have the following chain of subsets
\begin{align}\label{equation 41}
F_{i+1}'\cap G_{l_1}'\supseteq F_{i+1}'\cap G_{l_2}'\supseteq\dots\supseteq F_{i+1}'\cap G_{l_k}'.
\end{align}
Note that for each $1\le b\le a$, $F_{i+1}\cap G_{l_b}\supseteq F_{i+1}\cap G_{l_a}=G_{l_a}'$, and thus $G_{l_b}'\supseteq G_{l_a}'$. Consequently, $G_{l_b}'\notin\F(\Delta_A)$ if $b<a$. In this case, intersecting $G_{l_a}'$ with each member of the chain in \Cref{equation 41} we obtain
\[
G_{l_a}'\cap G_{l_a+1}'\supseteq \dots\supseteq G_{l_a}'\cap G_{l_k}',
\]
where $\N_{\D_A}(G_{l_a}')\subseteq \{G_{l_a+1}',\ldots,G_{l_k}'\}$. Thus we see that $G_i'$ is a leaf of $\D_{A}$ for each $i\in[q-1]$. Therefore, the condition (2) in \Cref{CM SF defn} is verified. 

To verify the condition (1) in \Cref{CM SF defn}, let us consider a vertex $x\in G_j'$, where $j\in\{q,\ldots,n,m+1,\ldots,s\}$. Then $x\in G_j$. If $x\in F_i$ for some $i\in\{1,q+1,\ldots,r\}$, then it is easy to see that $x\in F_i'$. Now suppose $x\in F_i$ for some $i\in\{2,\ldots,q\}$. Note that $F_i\cap G_{i-1}\neq\emptyset$, by construction. Thus, in this case, by \Cref{leaf chain}, either $F_i\cap G_j\supseteq F_i\cap G_{i-1}$ or $F_i\cap G_j\subseteq F_i\cap G_{i-1}$. Observe that if $F_i\cap (G_j\setminus A)=F_i\cap G_j\supseteq F_i\cap G_{i-1}=G_{i-1}'$, then $G_j'\supseteq G_{i-1}'$, a contradiction to the fact that $G_j'\in\F(\D_A)$. Thus we must have $F_i\cap G_j\subseteq F_i\cap G_{i-1}=G_{i-1}'$ and consequently, $x\in G_{i-1}'$. Thus, condition (2) in \Cref{CM SF defn} is verified. Also, by the minimal representation of $\D_A$ in \Cref{equation 52}, we see that condition (3) in \Cref{CM SF defn} is also verified. 

To prove the condition (5) in \Cref{CM SF defn}, first note that if $\D'$ contains exactly one facet $G_1$, then it follows from the definition of grafting that $\D_A\setminus \l G_1'\r=(\D\setminus \l G_1\r)_A$ is a grafted simplicial complex.
Now suppose $\D'$ contains more than one facet with $\F(\D')=\{G_1,\ldots,G_s\}$, as before. Further, suppose for $i\in\{q,\ldots,n\}$, we have $\{G_{i_1},\ldots,G_{i_p}\}\subseteq \F(\D')\setminus\{G_i\}$ such that $G_i\setminus A\subseteq G_{i_j}\setminus A$, for each $j\in[p]$. Then for each $i\in\{q,\ldots,n,m+1,\ldots,s\}$, we have
\begin{align*}
    \D_A\setminus \l G_i'\r=\begin{cases}
        (\D\setminus \l G_i\r)_A&\text{ if }i\in\{m+1,\ldots,s\},\\
        (\D\setminus\l G_i,G_{i_1},\ldots,G_{i_p}\r)_A&\text{ if }i\in\{q,\ldots,n\}.
    \end{cases}
\end{align*}
Here one can see that $A\subseteq \bigcap_{\underset{j\neq i}{F_1\cap G_j\neq\emptyset}}(F_1\cap G_j)$. Thus, using the fact that $\D\setminus \l G_i\r$ is grafted for each $i\in[s]$, and by the induction on a number of facets of $\D'$, we have that $\D_A\setminus\l G_i'\r$ is again a grafted simplicial forest. This completes the proof of Claim 2 and, subsequently, the proof of this lemma.
\end{proof}

We now proceed to determine the Krull dimension of $R/I(\Delta)^{[k]}$, where $\D$ is a Cohen-Macaulay simplicial forest.

\begin{theorem}\label{dimension lemma}
    Let $\Delta$ be a Cohen-Macaulay simplicial forest. Then for each $1\le k\le \nu(\Delta)$,
    \[
    \dim(R/I(\Delta)^{[k]})=|V(\Delta)|-\nu(\D)+k-1.
    \]
\end{theorem}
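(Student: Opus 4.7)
The plan is to pass to the combinatorial description
\[
\dim\bigl(R/I(\Delta)^{[k]}\bigr)=\max\bigl\{|T|:T\subseteq V(\Delta),\ \nu(\Delta|_T)<k\bigr\},
\]
where $\Delta|_T$ denotes the subcomplex of $\Delta$ with facet set $\{F\in\F(\Delta):F\subseteq T\}$, and then bound this quantity from both sides.

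For the \emph{upper bound}, I would argue that if $|T|\ge n-r+k$ then $|V(\Delta)\setminus T|\le r-k$; since the $r$ leaves are pairwise disjoint by condition~(4) of grafting, each removed vertex kills at most one leaf, so at least $k$ leaves survive inside $T$ and form a $k$-matching of $\Delta|_T$, contradicting $\nu(\Delta|_T)<k$. For the \emph{lower bound}, I would construct $T$ of exact size $n-r+k-1$: by \Cref{leaf chain} every leaf is a good leaf, so by \Cref{goof leaf prop} its joint-neighbors may be ordered as $G_{i_1},\ldots,G_{i_{t_i}}$ with $F_i\cap G_{i_1}\supseteq\cdots\supseteq F_i\cap G_{i_{t_i}}$. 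The key step is to pick $u_i\in F_i\cap G_{i_{t_i}}$ (or any $u_i\in F_i$ if $F_i$ has no joint-neighbor), because the chain then forces $u_i$ to lie in \emph{every} joint adjacent to $F_i$. I would then select any $r-k+1$ leaves to ``kill'', set $S=\{u_j:F_j\text{ killed}\}$ and $T=V(\Delta)\setminus S$, and observe that the facets surviving in $\Delta|_T$ are precisely the $k-1$ kept leaves together with those joints $G$ satisfying $G\subseteq\bigcup_{F_i\text{ kept}}F_i$.

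The main obstacle will be verifying that $\nu(\Delta|_T)\le k-1$. The plan is to argue by contradiction: suppose $M\subseteq\F(\Delta|_T)$ is a matching of size $k$ consisting of $a$ kept leaves and $b=k-a\ge 1$ surviving joints. By disjointness within $M$, each joint $G\in M$ satisfies $G\subseteq\bigcup_{F_i\text{ kept},\,F_i\notin M}F_i$; since $G$ is a facet it cannot sit inside a single leaf, so $G$ meets at least two of the $(k-1)-a$ kept leaves outside $M$. If two joints $G,G'\in M$ met a common kept leaf $F_i$, the good-leaf chain $F_i\cap G\supseteq F_i\cap G'$ (after relabelling) would place a vertex of $F_i\cap G'$ inside $G$, contradicting the disjointness of $G,G'$. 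Hence distinct joints in $M$ meet disjoint subsets of the $(k-1)-a$ kept leaves outside $M$, forcing $2b\le(k-1)-a$ and therefore $a\ge k+1$, which is incompatible with $a\le k-1$. This contradiction yields $\nu(\Delta|_T)\le k-1$ and completes the lower bound.
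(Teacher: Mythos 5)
Your proposal is correct and follows essentially the same route as the paper: your upper bound is the paper's argument that removing at most $r-k$ vertices leaves $k$ pairwise-disjoint leaves intact, and your killed vertices $u_j$ are exactly the paper's choice $x_j\in\mathcal{A}_j=\bigcap_{F_j\cap G_i\neq\emptyset}(F_j\cap G_i)$ generating the prime $\q$ of height $r-k+1$, recast in the dual language of independent sets rather than minimal primes. The only difference is the final verification that no $k$-matching survives: the paper assigns to each facet of a $k$-matching a distinct leaf it meets (via \Cref{leaf chain}) and concludes by pigeonhole, whereas you run the equivalent double count $2b\le(k-1)-a$; both rest on the same two facts (every vertex lies in a unique leaf, and two disjoint facets cannot both meet a given leaf), so this is a presentational variant rather than a genuinely different proof.
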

\begin{proof} Let $\D=\l F_1,\ldots,F_r\r\cup\l G_1,\ldots,G_s\r$ be a Cohen-Macaulay simplicial forest. Then by \Cref{rmk: matching number}, $\nu(\D)=r$. To prove the formula, it is enough to show that $\h(I(\Delta)^{[k]})=r-k+1$. Let $\p$ be a minimal prime ideal containing $I(\Delta)^{[k]}$. Recall that, $V(\D)\subseteq\cup_{i=1}^r F_i$. If $\h(\p)\le r-k$, then there exist leaves $F_{i_1},\ldots,F_{i_k}$ of $\Delta$ such that $\p\cap F_{i_j}=\emptyset$ for each $1\le j\le k$. In this case $\prod_{j=1}^k\x_{F_{i_j}}\in I(\Delta)^{[k]}$ but $\prod_{j=1}^k\x_{F_{i_j}}\notin\p$, a contradiction. Hence $\h(\p)\ge r-k+1$ for any minimal prime ideal containing $I(\Delta)^{[k]}$. Now we proceed to construct a minimal prime ideal $\q$ containing $I(\Delta)^{[k]}$ such that $\h(\q)=r-k+1$. For each $i\in[r]$, let
    \begin{align*}
        \mathcal{A}_i=\begin{cases}
            F_i&\text{ if }F_i\cap G_j=\emptyset\text{ for all }j,\\
            \cap_{F_i\cap G_j\neq\emptyset}(F_i\cap G_j)&\text{ if }F_i\cap G_j\neq\emptyset\text{ for some }j.
        \end{cases}
    \end{align*}
    By \Cref{leaf chain}, we have that $\mathcal A_i\neq\emptyset$ for each $i\in [r]$. Choose some $x_i\in\mathcal A_i$. Using the above description of the sets $\mathcal{A}_i$, we construct the prime ideal 
    \[
    \q=\l x_i\in \mathcal A_i\mid i\in [r-k+1]\r.
    \]
    Then $\h(\q)=r-k+1$ and thus it is enough to show that $I(\Delta)^{[k]}\subseteq \q$. Let $\prod_{i=1}^k\x_{H_i}\in\G(I(\Delta)^{[k]})$. Then for each $i\in[k]$ there exists a leaf $F_{t_i}$ of $\Delta$ such that $H_i\cap F_{t_i}\neq \emptyset$. We proceed to show that $F_{t_i}\neq F_{t_j}$ for each $i\neq j$. Observe that if $H_i$ is a leaf of $\Delta$ for some $i$, then there is nothing to show since $H_i,H_j$ forms a matching in $\Delta$. Now suppose $H_i=G_{t_i}$ and $H_j=G_{t_j}$ so that $G_{t_i}\cap F_{t_i}\neq\emptyset$ and $G_{t_j}\cap F_{t_j}\neq\emptyset$. Moreover, we assume $F_{t_i}=F_{t_j}$. In this case by \Cref{leaf chain}, $G_{t_i}\cap G_{t_j}\neq\emptyset$, a contradiction. Thus $F_{t_i}\neq F_{t_j}$ for each $i\neq j$. It is easy to see that $\mathcal A_{t_i}\subseteq H_i\cap F_{t_i}$, and in particular, $x_i\in H_i\cap F_{t_i}$ for each $i\in [k]$. Thus $\prod_{i=1}^k\x_{H_i}\in \q$ and this completes the proof.
\end{proof}

We are now ready to prove one of the main results of this article, where we establish the combinatorial formula for the depth of the $k^{\text{th}}$ square-free powers of the facet ideal of a Cohen-Macaulay simplicial forest.

\begin{theorem}\label{main theorem 1}
    Let $\Delta$ be a Cohen-Macaulay simplicial forest with the vertex set $V(\Delta)$. Let $R=\K[x_i\mid x_i\in V(\Delta)]$ be the polynomial ring over the field $\K$. Then, for each $1\le k\le\nu(\Delta)$, 
    \begin{enumerate}
        \item[(i)] $\d(R/I(\Delta)^{[k]})=|V(\Delta)|-\nu(\D)+k-1,$
        \item[(ii)] $\d(R/(I(\Delta)^{[k]}+\l\x_{F}\r))= |V(\Delta)|-\nu(\D)+k-1$, where $F$ is a special leaf of $\Delta$.
    \end{enumerate}

\end{theorem}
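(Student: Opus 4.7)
The plan is to prove (i) and (ii) together by induction on $k$. The base case $k=1$ is immediate: (i) holds since $R/I(\D)$ is Cohen-Macaulay (\Cref{CM forest theorem}) of dimension $|V(\D)|-r$ by \Cref{dimension lemma}, and (ii) is trivial because $\x_F\in I(\D)$ for any facet $F$, so $I(\D)+(\x_F)=I(\D)$.

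For the inductive step with $k\geq 2$, I would fix a special leaf $F$ of $\D$ (whose existence is guaranteed by \Cref{special leaf existence}) and consider
\[
0\to R/(I(\D)^{[k]}:\x_F)\xrightarrow{\cdot\x_F} R/I(\D)^{[k]}\to R/(I(\D)^{[k]}+(\x_F))\to 0.
\]
By \Cref{colon lemma}, the left-hand module is $R/I(\D_1)^{[k-1]}$ for a Cohen-Macaulay simplicial forest $\D_1$ with exactly $r-1$ leaves, so applying the inductive hypothesis (i) to $\D_1$ at level $k-1$ (and accounting for the free polynomial variables in $V(\D)\setminus V(\D_1)$) gives that its depth equals $|V(\D)|-r+k-1$. \Cref{dimension lemma} supplies the matching upper bound $\d(R/I(\D)^{[k]})\leq|V(\D)|-r+k-1$, and since $\dim(R/(I(\D)^{[k]}+(\x_F)))\leq\dim(R/I(\D)^{[k]})$ it also yields the same upper bound for the right-hand module. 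By the depth lemma, it therefore suffices to prove $\d(R/(I(\D)^{[k]}+(\x_F)))\geq|V(\D)|-r+k-1$; combined with the upper bound this will give (ii), and then the same SES immediately gives (i).

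For this remaining lower bound I plan to further decompose $R/(I(\D)^{[k]}+(\x_F))$ via a second short exact sequence---e.g.\ splitting off a free vertex $y\in F$ (which exists since $F$ is a leaf) or a vertex in $F\cap G$ for a joint $G$. A direct computation of the colon and sum yields pieces of the form $I(\D\setminus F)^{[k]}+(\x_{F\setminus y})$ and $I(\D\setminus F)^{[k]}+(y)$, and I would then reduce these to Cohen-Macaulay situations where the induction applies, using the contraction lemma \Cref{contraction CM}.

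The main obstacle is that $\D\setminus F$ is in general \emph{not} a Cohen-Macaulay simplicial forest, so the inductive hypothesis cannot be invoked on it directly. This is exactly where the special leaf hypothesis becomes decisive: by the characterization in \Cref{CM special leaf remark}, $G_l\cap G_m\neq\emptyset$ forces $(G_l\cap G_m)\setminus(F\cap\Gamma)\neq\emptyset$, which guarantees that the intersections among the surviving facets persist after contracting on subsets of $F\cap G_j$. Combined with \Cref{contraction CM}, this should let the offending modules be rewritten in terms of square-free powers of smaller Cohen-Macaulay simplicial forests, closing the induction. The delicate part of the argument will be tracking how the various generators transform through successive colon, sum, and contraction operations, and verifying that a special-leaf structure persists in the smaller forests that arise.
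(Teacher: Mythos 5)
Your outer reduction is the same as the paper's: the short exact sequence $0\to R/(I(\D)^{[k]}:\x_F)\to R/I(\D)^{[k]}\to R/(I(\D)^{[k]}+\l\x_F\r)\to 0$, \Cref{colon lemma} for the colon, and \Cref{dimension lemma} for the matching upper bounds correctly reduce everything to the lower bound $\d(R/(I(\D)^{[k]}+\l\x_F\r))\ge |V(\D)|-r+k-1$. But that lower bound is the real content of the theorem, and your plan for it has a genuine gap. First, your induction is on $k$ alone, while every decomposition you propose for $I(\D)^{[k]}+\l\x_F\r$ (splitting off a free vertex $y\in F$, giving pieces like $I(\D\setminus F)^{[k]}+\l\x_{F\setminus y}\r$ and $I(\D\setminus F)^{[k]}+\l y\r$) stays at the \emph{same} power $k$, so the inductive hypothesis you set up never applies to these pieces. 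Second, as you yourself note, $\D\setminus F$ is in general not Cohen--Macaulay, and the special-leaf property does not repair the specific modules your splitting produces; saying that \Cref{contraction CM} ``should let the offending modules be rewritten'' is precisely the step that needs an argument, and with the free-vertex splitting there is no evident way to do it.

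The paper closes this gap differently: it inducts on $|V(\D)|$ (proving (i) and (ii) simultaneously for all $k$), and instead of a free vertex it colons $J=I(\D)^{[k]}+\l\x_{F}\r$ by $\x_{A_F}$, where $A_F=\bigcap_{F\cap G_j\neq\emptyset}(F\cap G_j)$. The special-leaf hypothesis is used exactly to prove $(J:\x_{A_F})=I(\D_{A_F})^{[k]}+\l\x_{F\setminus A_F}\r$ (it guarantees that a $k$-matching of the contraction lifts to a $k$-matching of $\D$), \Cref{contraction CM} shows $\D_{A_F}$ is again a Cohen--Macaulay forest, and one checks that $F\setminus A_F$ is a special leaf of $\D_{A_F}$; similarly $J+\l\x_{A_F}\r=I(\D_2)^{[k]}+\l\x_{A_F}\r$ with $\D_2=(\F(\D)\cup\{A_F\})\setminus\N_\D[F]$ Cohen--Macaulay and $A_F$ a special leaf of it. Both pieces are then statement (ii) for complexes with strictly fewer vertices at the same $k$, which is why the induction must run on the vertex count. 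Your sketch gestures at the right ingredients (special leaf, contraction), but without the colon by $\x_{A_F}$, the two subclaims, and the change of induction variable, the argument as proposed does not close.
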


\begin{proof}
    We prove (i) and (ii) simultaneously by induction on $|V(\Delta)|$. If $|V(\Delta)|\le 2$, then it is easy to verify (i) and (ii). Indeed, if $|V(\D)|=1$, then $k=1$ and $I(\D)$ is generated by the variable in $R$, and the above inequalities hold. If $|V(\D)|=2$, then the only possibilities are $I(\D)=\l x_1,x_2\r$ or $I(\D)=\l x_1x_2\r$. When $I(\D)=\l x_1,x_2\r$, the possibilities of $k$ are $1$ and $2$, and in case $I(\D)=\l x_1x_2\r$ the only possibility of $k$ is $1$. Thus, one can observe that in all the cases the above equalities hold. Therefore, we may assume that $|V(\Delta)|\ge 3$.
    In this case, if $k=1$, then $I(\Delta)^{[1]}+\l\x_F\r=I(\Delta)$ for any special leaf $F$ of $\Delta$, and thus using \Cref{dimension lemma}, we have 
    \begin{align*}
    \d(R/I(\Delta)^{[1]})&=\d(R/(I(\Delta)^{[1]}+\l\x_F\r))\\
    &=\dim(R/I(\Delta))\\
    &=|V(\Delta)|-\nu(\D).    
    \end{align*}
     In view of this, from now onward, we assume that $k\ge 2$. Let $\Delta=\l F_1,\ldots,F_r\r\cup\l G_1,\ldots,G_s\r$ be a Cohen-Macaulay simplicial forest with $\Delta'=\l  G_1,\ldots,G_s\r$ and $\Delta$ is a grafting of $\Delta'$. Then by \Cref{rmk: matching number}, $\nu(\D)=r$. Now, by \Cref{special leaf existence}, $\Delta$ contains a special leaf. Without loss of generality, let $F_1$ be a special leaf of $\Delta$.

     \noindent
\textbf{Claim }: $\d(R/(I(\Delta)^{[k]}+\l\x_{F_1}\r))\ge |V(\Delta)|-r+k-1$.

\noindent
\textit{Proof of Claim}: 
    First, we consider the case when $F_1\cap G_i=\emptyset$ for each $i\in [s]$. In this case, $I(\Delta)^{[k]}+\l \x_{F_1}\r=I(\Delta_1)^{[k]}+\l \x_{F_1}\r$, where $\F(\Delta_1)=\F(\Delta)\setminus\{F_1\}$. It is easy to see that $\Delta_1$ is a Cohen-Macaulay simplicial forest with $\vert V(\D_1)\vert=\vert V(\D)\vert - \vert F_1\vert<\vert V(\D)\vert$. Therefore, by the additivity of depth \cite[Theorem 3.1.34]{RHV} and the induction hypothesis, we have
    \begin{align*}
        \d(R/(I(\Delta)^{[k]}+\l \x_{F_1}\r))&= \d(R/(I(\Delta_1)^{[k]}+\l \x_{F_1}\r))\\&=\d(\K[V(\D_1)]/I(\D_1)^{[k]})+\d(\K[F_1]/\l \x_{F_1}\r)\\ &=|V(\Delta_1)|-(r-1)+k-1+|F_1|-1\\
        &=|V(\Delta)|-r+k-1.
    \end{align*}
     Now suppose $F_1\cap G_i\neq\emptyset$ for some $i\in[s]$. Without loss of generality, let $\N_{\Delta}(F_1)=\{G_1,\ldots,G_n\}$ and let $A_{F_1}=\cap_{F_1\cap G_j\neq\emptyset}(F_1\cap G_j)$. By \Cref{leaf chain}, $A_{F_1}\neq \emptyset$ and by \Cref{contraction CM}, $\Delta_{A_{F_1}}$ is a Cohen-Macaulay simplicial forest. For the rest of the proof, we follow the notations introduced as in \Cref{contraction CM}. Thus there exists non-negative integers $q,n$ and $m$ such that $ q\le \min\{r-1,n\}$, $ n\le s$, $n< m\le s$, and %the minimal presentation of $\Delta_{A_{F_1}}$ is 
    \begin{equation}\label{Equation presentation}
    \begin{split}
    \Delta_{A_{F_1}}=\l F_1\setminus A_{F_1},&G_1\setminus A_{F_1},\ldots,G_{q-1}\setminus A_{F_1},F_{q+1},\ldots,F_r\r\\
    &\l G_{q}\setminus A_{F_1},\ldots,G_n\setminus A_{F_1},G_{m+1},\ldots,G_s\r,
   \end{split}
    \end{equation}
where $\{F_1\setminus A_{F_1},G_1\setminus A_{F_1},\ldots,G_{q-1}\setminus A_{F_1},F_{q+1},\ldots,F_r\}$ are all the leaves of $\D_{A_{F_1}}$.

    \noindent
\textbf{Subclaim 1}: $(I(\Delta)^{[k]}+\l \x_{F_1}\r):\x_{A_{F_1}}=I(\Delta_{A_{F_1}})^{[k]}+\l\x_{F_1\setminus A_{F_1}}\r$. 

\noindent
\textit{Proof of Subclaim 1}: Let $\prod_{i=1}^k\x_{H_i}\in \G(I(\Delta)^{[k]})$ such that $\x_{F_1}\nmid \prod_{i=1}^k\x_{H_i}$. Then $\{H_1,\ldots,H_k\}$ is a $k$-matching of $\Delta$ and $F_1\neq H_i$ for each $i\in[k]$. Note that, if $H_i\cap A_{F_1}\neq\emptyset$, then $H_i\in \N_{\Delta}(F_1)$. Thus $|\{i\mid H_i\cap A_{F_1}\neq\emptyset\}|\le 1$. Hence
\begin{align*}
     \frac{\prod_{i=1}^k\x_{H_i}}{\gcd(\x_{A_{F_1}}, \prod_{i=1}^k\x_{H_i})}=\begin{cases}
         \x_{H_j\setminus A_{F_1}}\cdot \prod_{\underset{i\neq j}{i=1}}^k\x_{H_i}&\text{ if }H_j\cap {A_{F_1}}\neq\emptyset\text{ for some }j\in[k],\\
         \prod_{i=1}^k\x_{H_i}&\text{ if }H_i\cap A_{F_1}=\emptyset\text{ for all }i\in[k].
     \end{cases}  
    \end{align*}
  One can observe from \Cref{Equation presentation} that if $H_i\cap A_{F_1}=\emptyset$, then either $H_i\in \F(\Delta_{A_{F_1}})$ or there exists some $H\in \F(\Delta_{A_{F_1}})$ such that $H\subseteq H_i$. Moreover, if $H_i\cap A_{F_1}\neq\emptyset$, then $H_i\setminus A_{F_1}\in\F(\Delta_{A_{F_1}})$. Thus using the fact that $\{H_1,\ldots,H_k\}$ forms a $k$-matching in $\Delta$, we obtain $(I(\Delta)^{[k]}+\l \x_{F_1}\r):\x_{A_{F_1}}\subseteq I(\Delta_{A_{F_1}})^{[k]}+\l\x_{F_1\setminus A}\r$. 

  Let $\prod_{i=1}^k\x_{H_i\setminus A_{F_1}}\in \G(I(\Delta_{A_{F_1}})^{[k]})$ such that $\x_{F_1\setminus A_{F_1}}\nmid \prod_{i=1}^k\x_{H_i\setminus A}$. In that case $H_i\in\F(\Delta)$ and $H_i\neq F_1$ for each $i\in [k]$. Also, since $F_1$ is a special leaf of $\Delta$, we have $H_i\cap H_j=\emptyset$ for each distinct $i,j\in[k]$. Thus $\{H_1,\ldots,H_k\}$ forms a $k$-matching in $\Delta$. Since $|\{i\mid H_i\cap A_{F_1}\}|\le 1$ we see that 
  \begin{align*}
      \x_{A_{F_1}}\cdot\prod_{i=1}^k\x_{H_i\setminus A_{F_1}}=\begin{cases}
          \prod_{i=1}^k\x_{H_i}&\text{ if }H_j\cap {A_{F_1}}\neq\emptyset\text{ for some }j\in[k],\\
          \x_{A_{F_1}}\cdot \prod_{i=1}^k\x_{H_i}&\text{ if }H_i\cap A_{F_1}=\emptyset\text{ for all }i\in[k].
      \end{cases}
  \end{align*}
  In particular, $\x_{A_{F_1}}\cdot\prod_{i=1}^k\x_{H_i\setminus A_{F_1}}\in I(\Delta)^{[k]}$. This completes the proof of Subclaim 1.

  \noindent
\textbf{Subclaim 2}: $F_1\setminus A_{F_1}$ is a special leaf of the Cohen-Macaulay simplicial forest $\Delta_{A_{F_1}}$.

\noindent
\textit{Proof of Subclaim 2}: By \Cref{contraction CM}, $\D_{A_{F_1}}$ is a Cohen-Macaulay simplicial forest, and $\{F_1\setminus A_{F_1},G_1\setminus A_{F_1},\ldots,G_{q-1}\setminus A_{F_1},F_{q+1},\ldots,F_r\}$ is the set of all the leaves of $\Delta_{A_{F_1}}$. By \Cref{CM special leaf remark}, it is enough to show that $(G_{t_1}\setminus A_{F_1})\cap(G_{t_2}\setminus A_{F_1})\neq\emptyset$ implies $((G_{t_1}\setminus A_{F_1})\cap(G_{t_2}\setminus A_{F_1}))\setminus (F_1\setminus A_{F_1})\neq\emptyset$, where $t_1,t_2\in \{q,\ldots,n,m+1,\ldots,s\}$ and $t_1\neq t_2$. Now suppose $(G_{t_1}\setminus A_{F_1})\cap(G_{t_2}\setminus A_{F_1})\neq\emptyset$. Hence $G_{t_1}\cap G_{t_2}\neq \emptyset$. Since $F_1$ is a special leaf we have that $(G_{t_1}\cap G_{t_2})\setminus F_1\neq\emptyset$. Moreover, since $A_{F_1}\subseteq F_1$, we have that $((G_{t_1}\cap G_{t_2})\setminus F_1)\setminus A_{F_1}=(G_{t_1}\cap G_{t_2})\setminus F_1\neq\emptyset$. In other words, $((G_{t_1}\cap G_{t_2})\setminus A_{F_1})\setminus(F_1\setminus A_{F_1})\neq\emptyset$. Thus $((G_{t_1}\setminus A_{F_1})\cap(G_{t_2}\setminus A_{F_1}))\setminus (F_1\setminus A_{F_1})\neq\emptyset$. Therefore, we conclude that $F_1\setminus A_{F_1}$ is a special leaf of $\Delta_{A_{F_1}}$.

By the induction hypothesis, the additivity of depth \cite[Theorem 3.1.34]{RHV}, Subclaim 1, Subclaim 2, and \Cref{contraction CM}, we have 
\begin{equation}\label{eq5}
    \begin{split}
        \d(R/((I(\Delta)^{[k]}+\l \x_{F_1}\r):\x_{A_{F_1}}))
        &=\d(\K[V(\D)]/(I(\D_{A_{F_1}})^{[k]}
        +\l \x_{F_1\setminus A_{F_1}}\r))\\
        &=\d(\K[V(\D_{A_{F_1}})]/(I(\D_{A_{F_1}})^{[k]}
        +\l \x_{F_1\setminus A_{F_1}}\r))\\
        & \hspace{140pt}  +\d(\K[A_{F_1}])\\
        &=|V(\Delta_{A_{F_1}})|-r+k-1+|V(\D)|-|V(\D_{A_{F_1}})|\\
        &=|V(\Delta)|-r+k-1.
    \end{split}
\end{equation}
Next, we have $I(\Delta)^{[k]}+\l \x_{F_1}\r+\l \x_{A_{F_1}}\r=I(\Delta_2)^{[k]}+\l \x_{A_{F_1}}\r$, where $\F(\Delta_2)=(\F(\Delta)\cup\{A_{F_1}\})\setminus \N_{\Delta}[F_1]$. Note that for each $H\in\F(\D_2)\cap\F(\D)$, we have $H\cap A_{F_1}=\emptyset$. Thus, one can see that $A_{F_1}$ is a special leaf of $\Delta_2$. Moreover, since $\Delta$ is Cohen-Macaulay, $\Delta\setminus \N_{\Delta}[F_1]$ is also Cohen-Macaulay. Hence, $\Delta_2$ is also a Cohen-Macaulay simplicial forest. Therefore, by the induction hypothesis, we have
\begin{equation}\label{eq6}
    \begin{split}
    \d(R/(I(\Delta)^{[k]}+\l \x_{F_1}\r+\l \x_{A_{F_1}}\r))&=|V(\Delta_2)|-r+k-1+|V(\Delta)|-|V(\Delta_2)|\\
    &=|V(\Delta)|-r+k-1.
\end{split}
\end{equation}
Now, we consider the following short exact sequence:
\[
0\rightarrow R/(J:\x_{A_{F_1}})\rightarrow R/J\rightarrow R/(J+\l \x_{A_{F_1}}\r)\rightarrow 0,
\]
where $J=I(\Delta)^{[k]}+\l \x_{F_1}\r$. Applying the Depth Lemma \cite[Lemma 2.3.9]{RHV}, we get 
\[
\d(R/(I(\Delta)^{[k]}+\l\x_{F_1}\r))\ge \min\left\{\d(R/(J:\x_{A_{F_1}})), \d(R/(J+\l \x_{A_{F_1}}\r))\right\}.
\]
Then by  \Cref{eq5} and (\ref{eq6}), we obtain
$\d(R/(I(\Delta)^{[k]}+\l\x_{F_1}\r))\ge |V(\Delta)|-r+k-1$. This completes the proof of the Claim. \par 

Again, observe that 
\begin{align*}
\d(R/(I(\Delta)^{[k]}+\l\x_{F_1}\r))\le \dim(R/(I(\Delta)^{[k]}+\l\x_{F_1}\r))\le \dim(R/I(\Delta)^{[k]}).    
\end{align*}
Thus, by \Cref{dimension lemma}, $\d(R/(I(\Delta)^{[k]}+\l\x_{F_1}\r))\leq|V(\Delta)|-r+k-1$. Consequently, 
\begin{align}\label{eq7}
    \d(R/(I(\Delta)^{[k]}+\l\x_{F_1}\r))=|V(\Delta)|-r+k-1.
\end{align}
This completes the proof of (ii).

Next, using \cref{colon lemma}, we have $(I(\Delta)^{[k]}:\x_{F_1})=I(\Delta_3)^{[k-1]}$,
    where $\F(\Delta_3)=\F(\Delta)\setminus \N_{\Delta}[F_1]$, and $\Delta_3$ is a Cohen-Macaulay simplicial forest. Thus, by the induction hypothesis, $\d(R/(I(\Delta)^{[k]}:\x_{F_1}))=|V(\Delta_3)|-(r-1)+(k-1)-1+|F_1|$. Note that $|V(\Delta_3)|=|V(\Delta)|-|F_1|$. Therefore, we have 
    \begin{align}\label{eq8}
     \d(R/(I(\Delta)^{[k]}:\x_{F_1}))=|V(\Delta)|-r+k-1.
    \end{align}
    Now, let us consider the following short exact sequence:
    $$0\rightarrow R/(I(\Delta)^{[k]}:\x_{F_1})\rightarrow R/I(\Delta)^{[k]}\rightarrow R/(I(\Delta)^{[k]}+\l\x_{F_1}\r) \rightarrow 0.$$
    Again, using the Depth Lemma, \Cref{eq7} and (\ref{eq8}), we get $\d(R/I(\Delta)^{[k]})\ge |V(\Delta)|-r+k-1$. Finally, since $\d(R/I(\Delta)^{[k]})\le \dim(R/I(\Delta)^{[k]})$, we conclude in view of \Cref{dimension lemma} that $\d(R/I(\Delta)^{[k]})=|V(\Delta)|-r+k-1=|V(\Delta)|-\nu(\D)+k-1$.
\end{proof}

%We now state and prove the main theorem of this article.
As an immediate consequence of \Cref{dimension lemma} and \Cref{main theorem 1}, we achieve our primary goal of the paper:

\begin{corollary}\label{main cor}
    Let $\Delta$ be a Cohen-Macaulay simplicial forest. Then for all $k\ge 1$, $R/I(\Delta)^{[k]}$ is also Cohen-Macaulay. 
\end{corollary}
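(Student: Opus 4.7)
The plan is to derive this corollary directly from the preceding two results, since the Cohen-Macaulay property of $R/I(\Delta)^{[k]}$ amounts to the equality of its depth and Krull dimension. First I would dispatch the range $1 \le k \le \nu(\Delta)$: \Cref{dimension lemma} yields $\dim(R/I(\Delta)^{[k]}) = |V(\Delta)| - r + k - 1$, while \Cref{main theorem 1}(i) yields $\d(R/I(\Delta)^{[k]}) = |V(\Delta)| - r + k - 1$, where $r$ denotes the number of leaves of $\Delta$. Since these two invariants coincide, $R/I(\Delta)^{[k]}$ is Cohen-Macaulay throughout this range.

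Next I would handle the remaining range $k > \nu(\Delta)$. By definition of the matching number, there is no $k$-matching in $\Delta$, so the minimal monomial generating set $\G(I(\Delta)^{[k]})$ is empty. Consequently $I(\Delta)^{[k]} = \l 0 \r$, and hence $R/I(\Delta)^{[k]} = R$ is Cohen-Macaulay as a polynomial ring over a field. Combining the two cases yields the statement for every $k \ge 1$.

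There is essentially no obstacle in this corollary itself; all the substantive work is concentrated in the preceding depth and dimension formulas, whose proofs rely on the existence of a special leaf (\Cref{special leaf existence}), the colon computation (\Cref{colon lemma}), and the contraction lemma (\Cref{contraction CM}). The only minor subtlety is that \Cref{main theorem 1} is stated under the hypothesis $1 \le k \le \nu(\Delta)$, so the tail range $k > \nu(\Delta)$ has to be treated separately via the vanishing of $I(\Delta)^{[k]}$ as indicated above; this is a one-line check rather than a genuine difficulty.
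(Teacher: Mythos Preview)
Your proposal is correct and matches the paper's own proof essentially line for line: split into the range $1\le k\le\nu(\Delta)$, where \Cref{dimension lemma} and \Cref{main theorem 1} give equal depth and dimension, and the range $k>\nu(\Delta)$, where $I(\Delta)^{[k]}=\l 0\r$ so that $R/I(\Delta)^{[k]}=R$.
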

\begin{proof}
    If $1\le k\le \nu(\Delta)$, then by \Cref{dimension lemma} and \Cref{main theorem 1}, we have that $R/I(\Delta)^{[k]}$ is Cohen-Macaulay. If $k>\nu(\D)$, then $I(\D)^{[k]}$ is the zero ideal, and hence, $R/I(\Delta)^{[k]}=R$ is Cohen-Macaulay. 
\end{proof}

\begin{remark}
    The result of the above corollary does not hold in general for the facet ideals of Cohen-Macaulay simplicial complexes. As simple graphs can be realized as $1$-dimensional simplicial complexes, consider the edge ideal of the graph $G_1$ given in \Cref{fig:2}. It follows from \cite{CM} that $R/I(G_1)$ is Cohen-Macaulay. On the other hand, $I(G_1)^{[2]}$ is generated by the monomials $\{ x_1y_1x_3y_3, x_1y_1x_2x_3,x_1y_1x_2y_2,x_3y_3x_2y_2,x_3y_3x_1x_2,x_2y_2x_1x_3\}$. Observe that both $\l x_1,x_2\r$ and $\l y_1,y_2,y_3\r$ are two minimal prime ideals containing $I(G_1)^{[2]}$. Thus $I(G_1)^{[2]}$ is not unmixed and consequently, $R/I(G_1)^{[2]}$ is not Cohen-Macaulay.
\end{remark}
\begin{figure}[H]
    \centering
    \begin{tikzpicture}
    [scale=.65]
            \path
            (0, 0)   coordinate (x1)
            (0.5, -1) coordinate (x2)
            ( -0.5, -1) coordinate (x3)
            (0, 0.75)   coordinate (y1)
            (1.2, -1.5)   coordinate (y2)
            (-1.2, -1.5)   coordinate (y3)
            (0, -1.5)   coordinate (D2);

            %\fill [lightgray]         (x1) -- (x2) -- (x3) -- (x1) -- cycle;

            \draw
            (x1) -- (x2) -- (x3) -- (x1) -- cycle
            (x1) -- (y1)
            (x2) -- (y2)
            (x3) -- (y3);

            \tikzstyle{vertex} = [draw,circle,fill=black,inner sep = 1.5pt]
            \node[vertex] [label = right: $x_1$] at (x1) {};
            \node[vertex] [label = right: $x_2$] at (x2) {};
            \node[vertex] [label = left: $x_3$] at (x3) {};
            \node[vertex] [label = above: $y_1$] at (y1) {};
            \node[vertex] [label = below: $y_2$] at (y2) {};
            \node[vertex] [label = below: $y_3$] at (y3) {};
            \node[] [label = below: $G_1$] at (D2) {};
        \end{tikzpicture}
        \hspace{3cm}
    \begin{tikzpicture}
    [scale=.65]
            \path
            (1.5, -1)   coordinate (x3)
            (0.5, -1) coordinate (x2)
            ( -0.5, -1) coordinate (x1)
            (-0.5, 0.35)   coordinate (y1)
            (0.5, 0.35)   coordinate (y2)
            (1.5, 0.35)   coordinate (y3)
            (0.5, -1.5)   coordinate (D2);

            %\fill [lightgray]         (x1) -- (x2) -- (x3) -- (x1) -- cycle;

            \draw
            (x1) -- (x2) -- (x3)
            (x1) -- (y1)
            (x2) -- (y2)
            (x3) -- (y3);

            \tikzstyle{vertex} = [draw,circle,fill=black,inner sep = 1.5pt]
            \node[vertex] [label = below: $x_1$] at (x1) {};
            \node[vertex] [label = below: $x_2$] at (x2) {};
            \node[vertex] [label = below: $x_3$] at (x3) {};
            \node[vertex] [label = above: $y_1$] at (y1) {};
            \node[vertex] [label = above: $y_2$] at (y2) {};
            \node[vertex] [label = above: $y_3$] at (y3) {};
            \node[] [label = below: $G_2$] at (D2) {};
        \end{tikzpicture}
    \caption{}
    \label{fig:2}
\end{figure}

\begin{remark}
For any simplicial complex $\Delta$ the square-free power $I(\D)^{[k]}$ is again a square-free monomial ideal, and thus one can associate a simplicial complex $\D'$ to the ideal $I(\D)^{[k]}$. It is interesting to note that although all square-free powers of a Cohen-Macaulay simplicial forest $\D$ is again Cohen-Macaulay (by \Cref{main cor}), it may happen that $I(\D)^{[k]}$ is not a simplicial forest for some $k$. For instance, consider the edge ideal $I(G_2)=\l x_1x_2, x_2x_3, x_1y_1, x_2y_2,x_3y_3\r$ of the graph $G_2$ as in \Cref{fig:2}. Observe that $I(G_2)$ is the edge ideal of a whisker three path and thus Cohen-Macaulay \cite{CM}. Note that, 
\[
I(G_2)^{[2]}=\l x_1y_1x_2y_2, x_1y_1x_3y_3, x_1y_1x_2x_3, x_1x_2x_3y_3, x_2y_2x_3y_3\r.
\] If we set $I(\D')=I(G_2)^{[2]}$, then it is easy to see that $\Delta'$ does not contain any free vertex. Thus $I(G_2)^{[2]}$ is not a facet ideal of a simplicial forest.
\end{remark}

In the rest of this section, we deal with the normalized depth function of square-free monomial ideals. Let $I\subseteq R$ be a square-free monomial ideal, and assume that $R$ is the smallest polynomial ring where $\mathcal{G}(I)\subseteq R$. Let $d_k$ denote the minimum degree of a monomial belonging to $\mathcal{G}(I^{[k]})$. The depth function of square-free powers of square-free monomial ideals was first considered in \cite{EHHM12} (see also \cite{FiHeHi}). In \cite{EHHM12}, the authors showed that for any $k\geq 0$, if $I^{[k]}\neq 0$, then one always have $\mathrm{depth}(R/I^{[k]})\geq d_k-1$. Thus, it makes sense to define the \textit{normalized depth function of }$I$ in the following way:
\[
g_I(k)=\mathrm{depth}(R/I^{[k]})- (d_k-1), \text{ for }1\leq k\leq \nu(I). 
\]

It was conjectured in \cite{EHHM12} that the normalized depth function $g_I(k)$ of any square-free monomial ideal $I$ is a nonincreasing function. However, in \cite{Fakhari_increasing}, Fakhari showed that for the class of monomial ideal $I=\l x_1x_3x_{i+4},x_1x_4x_5,x_2x_3x_4,x_2x_3x_6\mid i\in[n-4]\r$, $g_I(2)-g_I(1)$ can be arbitrarily large. Note that $I$ is not the facet ideal of a simplicial forest. Indeed, it is easy to see that the subcollection $\{\{x_1,x_3,x_5\},\{x_1,x_4,x_5\},\{x_2,x_3,x_4\}\}$ does not contain any leaf. In the following theorem, we show that when $\Delta$ is a Cohen-Macaulay simplicial forest, the normalized depth function $g_{I(\Delta)}(k)$ is nonincreasing.  
\begin{theorem}\label{normalised depth}
    Let $\Delta$ be a Cohen-Macaulay simplicial forest. Then, the normalized depth function $g_{I(\Delta)}(k)$ is nonincreasing.
\end{theorem}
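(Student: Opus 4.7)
The proof reduces immediately to the explicit depth formula established in \Cref{main theorem 1}. Indeed, since $\depth(R/I(\Delta)^{[k]})=|V(\Delta)|-r+k-1$ for all $1\le k\le\nu(\Delta)$, the ``depth-part'' of the increment $g_{I(\Delta)}(k+1)-g_{I(\Delta)}(k)$ is exactly $+1$. Therefore, the plan is to exploit the identity
\[
g_{I(\Delta)}(k+1)-g_{I(\Delta)}(k) = \bigl(\depth(R/I(\Delta)^{[k+1]})-\depth(R/I(\Delta)^{[k]})\bigr) - (d_{k+1}-d_k) = 1-(d_{k+1}-d_k),
\]
valid for every $1\le k\le\nu(\Delta)-1$, and to reduce the claim to the purely combinatorial inequality $d_{k+1}\ge d_k+1$.

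To establish the latter, I would argue directly from the matching interpretation of the minimal generators: a minimum-degree generator of $I(\Delta)^{[k+1]}$ corresponds to a $(k+1)$-matching $\{H_1,\ldots,H_{k+1}\}$ of $\Delta$ minimizing $\sum_{i=1}^{k+1}|H_i|$, and this sum equals $d_{k+1}$. Dropping any one facet, say $H_{k+1}$, yields a $k$-matching $\{H_1,\ldots,H_k\}$, hence
\[
d_k \;\le\; \sum_{i=1}^{k}|H_i| \;=\; d_{k+1}-|H_{k+1}| \;\le\; d_{k+1}-1,
\]
because $|H_{k+1}|\ge 1$ for every facet. This gives $d_{k+1}-d_k\ge 1$, so $g_{I(\Delta)}(k+1)-g_{I(\Delta)}(k)\le 0$, which is precisely nonincreasing behavior.

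Finally, I would remark on the range of $k$: for $k\ge\nu(\Delta)$ we have $I(\Delta)^{[k+1]}=0$ and $g_{I(\Delta)}$ is not defined there, so the inequality only needs checking for $1\le k\le \nu(\Delta)-1$, both endpoints of which fall within the scope of \Cref{main theorem 1}. There is no essential obstacle in this argument beyond invoking \Cref{main theorem 1}; the whole content of the theorem has already been absorbed into the depth formula, and the inequality $d_{k+1}\ge d_k+1$ is a soft consequence of the fact that each facet contributes at least one variable to the generator it produces.
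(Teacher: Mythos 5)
Your proposal is correct and follows essentially the same route as the paper: both reduce the claim to the depth formula of \Cref{main theorem 1} together with the inequality $d_{k+1}-d_k\ge 1$. The only difference is that the paper asserts $d_{k+1}-d_k\ge 1$ without comment, whereas you justify it via the matching interpretation of minimal generators, which is a fine (and correct) addition.
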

\begin{proof}
    Note that $g_{I(\Delta)}(k)-g_{I(\Delta)}(k+1)=\d(R/I(\Delta)^{[k]})-\d(R/I(\Delta)^{[k+1]})+d_{k+1}-d_k$. Since $d_{k+1}-d_k\ge 1$, using \Cref{main theorem 1} we obtain $g_{I(\Delta)}(k)-g_{I(\Delta)}(k+1)\ge 0$, and thus $g_{I(\Delta)}(k)$ is a nonincreasing function. 
\end{proof}

\noindent
\textbf{Acknowledgements.} The authors would like to thank the anonymous referees for their careful reading and numerous valuable suggestions, which have significantly enhanced the clarity and readability of the paper. In particular, Remark 2.7, Remark 3.8, and Remark 3.9 arose due to the suggestions of the referees. They also thank Francesco Navarra and Ayesha Asloob Qureshi for carefully going through the first draft of this paper. Das and Roy are partially supported by a grant from the Infosys Foundation. Saha would like to thank the Infosys Foundation for a grant and the NBHM (India) for a Postdoctoral Fellowship during his stay at the Chennai Mathematical Institute.

%\noindent
%\section*{Acknowledgements}
%The authors would like to thank Francesco Navarra and Ayesha Asloob Qureshi for carefully going through the first draft of this paper. Das and Roy are supported by Postdoctoral Fellowships at Chennai Mathematical Institute. Saha would like to thank the National Board for Higher Mathematics (India) for the financial support through the NBHM Postdoctoral Fellowship. All the authors are partially supported by a grant from the Infosys Foundation.

%\subsection*{Data availability statement} Data sharing does not apply to this article as no new data were created or
%analyzed in this study.

%\subsection*{Conflict of interest} The authors declare that they have no known competing financial interests or personal
%relationships that could have appeared to influence the work reported in this paper.

\bibliographystyle{abbrv}
\bibliography{ref}

\end{document}